\DeclareMathOperator{\Cov}{Cov}
\DeclareMathOperator{\Dim}{Dim_{_H}}
\newcommand{\N}{\mathbb{N}}
\newcommand{\Z}{\mathbb{Z}}
\newcommand{\R}{\mathbb{R}}
\newcommand{\T}{\mathbb{T}}
\newcommand{\lip}{\text{\rm Lip}}
\newcommand{\Lp}{\text{\rm L}}
\renewcommand{\P}{\mathrm{P}}
\newcommand{\E}{\mathrm{E}}
\renewcommand{\d}{{\rm d}}
\newcommand{\e}{{\rm e}}
\renewcommand{\geq}{\geqslant}
\renewcommand{\leq}{\leqslant}
\renewcommand{\ge}{\geqslant}
\renewcommand{\le}{\leqslant}
\author{Davar Khoshnevisan\\University of Utah
	\and Kunwoo Kim\\POSTECH
	\and Carl Mueller\\University of Rochester}
\title{\bf On the valleys of the stochastic heat equation%
        \thanks{%
	Research supported in part by  the National Science Foundation grant 
DMS-1855439 (DK), the National Research Foundation of Korea grants  2019R1A5A1028324 and 2020R1A2C4002077 (KK), and by Simons Foundation Collaboration Grant 513424 (CM).}}
\date{Version of November 5, 2022}
\newtheorem{stat}{Statement}[section]
\newtheorem{proposition}[stat]{Proposition}
\newtheorem{theorem}[stat]{Theorem}
\newtheorem{lemma}[stat]{Lemma}
\theoremstyle{definition}
\newtheorem{remark}[stat]{Remark}
\numberwithin{equation}{section}
\begin{document}
\maketitle

\begin{abstract} 
We consider a generalization of the parabolic Anderson model driven by
space-time white noise, also called 
the stochastic heat equation, on the real line.  High peaks of solutions 
have been extensively studied under the name of intermittency, but less is 
known about spatial regions between peaks, which may loosely refer to as valleys. 
We present two results about the valleys of the solution. 

Our first theorem provides information about the size of valleys and the supremum of the 
solution over  a valley.  More precisely, we show that the supremum of the solution 
over a valley vanishes as $t\to\infty$, and we establish an upper bound of
$\exp\{-\text{const}\cdot t^{1/3}\}$
for the rate of decay.  We  demonstrate also that the length of a valley grows at 
least as $\exp\{+\text{const}\cdot t^{1/3}\}$ as $t\to\infty$.  

Our second theorem asserts that  the length of the valleys are eventually
infinite when the initial data has subgaussian tails.
\end{abstract}

\noindent{\it Keywords:} The stochastic heat equation,  parabolic Anderson model, 
dissipation, valleys.\\

\noindent{\it \noindent AMS 2010 subject classification:}
	Primary: 60H15,  Secondary: 35R60, 35K05.

%%%%%%%%%%%%%%%%%%%%%%%%%%%%%%%%%%%%%%%%%%%%%%%%%%%%%%
%%%%%%%%%%%%%%%%%%%% Introduction %%%%%%%%%%%%%%%%%%%%
%%%%%%%%%%%%%%%%%%%%%%%%%%%%%%%%%%%%%%%%%%%%%%%%%%%%%%

\section{Introduction \& Main Result}

Our objects of study are stochastic heat equations driven by multiplicative space-time 
white noise, including  the parabolic Anderson model,
whose solution are known to exhibit \emph{intermittency}. Intuitively speaking, intermittency refers  to the 
property that  the solution  tends to develop tall peaks distributed over 
small regions --- these are the so-called
\emph{intermittent islands} --- and those islands are separated 
by large areas where  the solution  is small --- these are the so-called \emph{valleys} or voids. There is an 
extensive literature about the  peaks particularly when the driving noise model does not depend on the time
variable --- see K\"onig \cite{Konig16} and its extensive references, for example --- and many techniques 
have been developed for understanding the  peaks. In the present
context of space-time white noise, a 
macroscopic fractal analysis has been developed, in Khoshnevisan, Kim, and Xiao \cite{KKX1,KKX2}, 
which characterizes how tall peaks 
are distributed over small islands. In the case of the parabolic Anderson model 
for space-time noise, much more detailed  results have recently become available; see for example
Corwin and Ghosal \cite{CorwinGhosal},
Das and Ghosal \cite{DasGhosal}, and Das, Ghosal, and Lin \cite{DasGhosalLin},
together with their substantial combined references.

In contrast to this literature, the regions between peaks, 
which we  call \emph{valleys}, have received less attention.  Our goal in this 
paper is to study the width of the valleys, how they grow
over time, and to estimate the supremum of our solution over the valley that straddles a
given point (here, the origin).  

Now we describe our results in more detail.  Let 
$\xi:=\{\xi(t\,,x)\}_{t\ge0,x\in\R}$ denote a two-parameter white noise.
That is, $\xi$ is a generalized mean-zero Gaussian random field with
generalized covariance
\begin{equation*}
	\Cov\left[ \xi(t\,,x)~,~\xi(s\,,y) \right] 
	= \delta_0(t-s)\delta_0(x-y)\qquad\text{
	for all $s,t\ge0$ and $x,y\in\R$. }
\end{equation*}
Let  $\mathcal{F}=(\mathcal{F}_t)_{t\ge0}$ designate the filtration of the white noise
$\xi$; that is,  for every $t>0$, $\mathcal{F}_t$ denotes the $\sigma$-algebra
generated by all Wiener integrals of the form $\int_{(0,t)\times\R}\varphi\,\d\xi$
as $\varphi$ ranges over $L^2(\R_+\times\R)$. We  assume without incurring loss of generality
that the  filtration $\mathcal{F}$ satisfies the usual conditions.  

The function $\sigma:\R\to\R$ is assumed to be non-random and satisfies
\begin{equation}\label{LL}
	\sigma(0)=0
	\qquad\text{and}\qquad
	0<{\rm L}_\sigma \le \lip_\sigma<\infty,
\end{equation}
where
\[
	{\rm L}_\sigma := \inf_{a\in\R\setminus\{0\}}
	\left| \frac{\sigma(a)}{a}\right|
	\qquad\text{and}\qquad
	\lip_\sigma:=\sup_{\substack{a,b\in\R:\\a\neq b}}
	\left| \frac{\sigma(b)-\sigma(a)}{b-a}\right|.
\]
Note, in particular, that 
\begin{equation} \label{eq:lip-sigma-condition}
	{\rm L}_\sigma |a| \le |\sigma(a)| \le \lip_\sigma|a|
	\qquad\text{for every }a\in\R.
\end{equation}

We say that $u$ is a solution to the stochastic heat equation  if
\begin{equation} \label{eq:SHE}\begin{split}
	&\partial_tu(t\,, x) 
		= \tfrac12 \partial_x^2u(t\,, x) 
       		+  \sigma(u(t\,, x))\xi(t\,,x)\quad \text{for }t>0, x\in \R,\\
	&\text{subject to}\quad u(0\,, x) =u_0(x)\quad \text{for }x\in \R.
\end{split}\end{equation}
where  $u_0:\R\to\R_+$ is assumed to be continuous and bounded
and $0<\|u_0\|_{L^1(\R)}\le\infty$.

Because the solution $u$ is not expected to be differentiable in either of
its two variables, \eqref{eq:SHE} must be interpreted in the generalized 
sense. Therefore,  we follow the treatment of  Walsh \cite{Walsh} and regard the SPDE \eqref{eq:SHE} 
as shorthand for the random integral equation
\begin{equation} \label{eq:mild}
	u(t\,, x)=\int_{-\infty}^\infty p_t(x-y) u_0(y) \, \d y+\int_{(0,t)\times\R}p_{t-s}(x-y)
\sigma(u(s\,, y))\xi(\d s\,  \d y),
\end{equation}
solved pointwise for every nonrandom choice of $t>0$ and $x\in\R$. 
Here $(t\,,x)\mapsto p_t(x)$ represents the fundamental solution to the heat equation on $\R$; that is,
\begin{equation*}
	p_{t}(x)=\frac{1}{\sqrt{2\pi t}}\exp\left(-\frac{x^2}{2t}\right),
\end{equation*}
and $\{S_t\}_{t\ge0}$ denotes the heat semigroup, which acts on $f:\R\to\R$ as
\begin{equation}\label{semigroup}
	(S_t f)(x)=\int_{-\infty}^\infty p_t(x-y)f(y)\,\d y.
\end{equation}
Finally, the double integral in \eqref{eq:mild} is a white noise integral in
the sense of Walsh \cite{Walsh}.
We refer to this solution as the \emph{mild solution} to \eqref{eq:SHE}; see Walsh
({\it ibid.}).

The existence and uniqueness of a mild solution to \eqref{eq:mild} is well 
known; see 
Walsh \cite[Chapter 3]{Walsh} for similar statements, and Dalang \cite{Dalang99}
for the general theory. Based on this general theory, we conclude 
that there is unique mild solution that is continuous in the variables $(t\,,x)$. Moreover,
the method of Mueller \cite{Mueller1} shows that
\begin{equation}\label{>0}
	\P\{ u>0\text{ on }(0\,,\infty)\times\R\}=1.
\end{equation}

Our main theorem follows.  

\begin{theorem} \label{th:SHE}
	If $u$ solves \eqref{eq:SHE} subject to $u_0\equiv1$, then
	there exist non-random numbers $\Lambda_i=\Lambda_i(\Lp_\sigma\,,\lip_\sigma)>0$
	$[i=1,2]$ and  an a.s.-finite random variable $T>0$ such that
	\[
		\sup_{|x| < \exp(\Lambda_1 t^{1/3})}u(t\,, x) \le \exp\left(-\Lambda_2t^{1/3}\right)
		\qquad\text{for all $t>T$}.   
	\]
\end{theorem}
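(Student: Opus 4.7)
The plan is to combine a quantitative fractional-moment decay estimate on $u(t,x)$ with a spatial union bound and a Borel--Cantelli argument. The key input, which I would establish as a separate lemma, is the intermittency estimate
\begin{equation*}
	\E\bigl[u(t,x)^\alpha\bigr] \le C\exp(-\gamma t)
\end{equation*}
for some $\alpha\in(0,1)$, $\gamma>0$, $C<\infty$ depending only on $\Lp_\sigma$ and $\lip_\sigma$, valid for all $t\ge 1$ and $x\in\R$. Translation invariance of $u_0\equiv 1$ and of $\xi$ makes the law of $u(t,x)$ independent of $x$. Such a bound quantifies the guiding intuition: although $\E[u(t,x)]=1$, the $\alpha$-moment with $\alpha<1$ is sensitive to the typical (small) value of $u$, which collapses exponentially in $t$.

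Granted this lemma, Markov's inequality yields pointwise
\begin{equation*}
	\P\{u(t,x)>\epsilon\}\le \epsilon^{-\alpha}\E\bigl[u(t,x)^\alpha\bigr]\le C\exp\bigl(\alpha\log(1/\epsilon)-\gamma t\bigr),
\end{equation*}
which for $\epsilon=\exp(-\Lambda_2 t^{1/3})$ reads $C\exp(\alpha\Lambda_2 t^{1/3}-\gamma t)$. To pass to the supremum over $|x|\le L:=\exp(\Lambda_1 t^{1/3})$, I would cover $[-L,L]$ by a sufficiently fine spatial grid and take a union bound, absorbing the oscillation of $u(t,\cdot)$ on each sub-interval via Kolmogorov--Chentsov applied to standard $L^p$ increment bounds on $u(t,x)-u(t,y)$. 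The grid spacing must be taken quite small (because the increment moments themselves grow in $t$), but the resulting multiplicative factor in the number of grid points is at worst $\exp(O(t))$ and can be absorbed by the $-\gamma t$ decay, producing
\begin{equation*}
	\P\Bigl\{\sup_{|x|\le L} u(t,x)>2\epsilon\Bigr\}\le C'\exp(-\gamma't)
\end{equation*}
for some $\gamma'>0$ and all $t$ sufficiently large, provided that $\Lambda_1$ and $\Lambda_2$ are small relative to the intrinsic SPDE constants.

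Summing the bound over integer times $t=n\in\N$ gives a convergent series, so Borel--Cantelli furnishes an a.s.-finite random variable $T$ for which the asserted inequality holds at every integer $n\ge T$. Joint (H\"older) continuity of $(t,x)\mapsto u(t,x)$ then extends the bound to every real $t\ge T$, at the cost of a mild shrinkage of the constants $\Lambda_1,\Lambda_2$.

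The principal obstacle is the fractional moment lemma itself. Jensen's inequality only gives $\E[u(t,x)^\alpha]\le 1$, so the exponential $e^{-\gamma t}$ decay must come from the specific SHE non-linearity rather than from any generic concavity bound. For the parabolic Anderson case $\sigma(a)=a$ this follows from an explicit Feynman--Kac computation; for the general $\sigma$ considered here one must argue from the mild formulation \eqref{eq:mild}, using \eqref{eq:lip-sigma-condition} in both directions --- the lower bound $|\sigma(a)|\ge \Lp_\sigma|a|$ to ensure that the noise is ``strong enough'' to drive typical values of $u$ to zero, and the Lipschitz upper bound to keep the relevant $L^p$ norms finite. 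A secondary technical point is to balance the chaining step carefully so that the growing oscillation of $u(t,\cdot)$ does not swallow the $-\gamma t$ factor.
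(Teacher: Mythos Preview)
Your proposal rests on two unproved claims, and both are genuine gaps rather than technicalities.

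The first is the fractional-moment lemma $\E[u(t,x)^\alpha]\le C\e^{-\gamma t}$ for $u_0\equiv 1$. For the parabolic Anderson model this is known (Das--Tsai), but for general $\sigma$ satisfying \eqref{LL} there is no such result, and ``argue from the mild formulation using both bounds on $\sigma$'' is not a proof sketch: fractional moments are not controlled by Picard/Gronwall arguments, and there is no comparison principle that transfers the PAM estimate to nonlinear $\sigma$. You correctly identify this as the principal obstacle, but you do not resolve it.

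The second gap is in the chaining step, and it persists even if the lemma is granted. You assert that the grid factor is ``at worst $\exp(O(t))$ and can be absorbed by the $-\gamma t$ decay,'' but this requires $\gamma$ to exceed the growth rate of the increment moments, and it does not. The mesh size needed to make the oscillation of $u(t,\cdot)$ smaller than $\epsilon=\e^{-\Lambda_2 t^{1/3}}$ is governed by $\E|u(t,x)-u(t,y)|^k$, which grows like $\e^{Ak^3t}$; the resulting grid cardinality is $\gtrsim\e^{Bt}$ with $B$ comparable to the second-moment Lyapunov exponent. For PAM one has $B\sim\bar\gamma(2)=\tfrac14$ while $\gamma=\max_{0<\alpha<1}|\bar\gamma(\alpha)|=\tfrac{1}{36\sqrt3}\approx 0.016$, so $\e^{(B-\gamma)t}\to\infty$. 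Shrinking $\Lambda_1,\Lambda_2$ does not help, since $B$ and $\gamma$ are intrinsic to $\sigma$ and do not involve the output constants.

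The paper circumvents both issues by an entirely different mechanism. It rewrites \eqref{eq:SHE} as a \emph{linear} equation $\partial_t v=\tfrac12\partial_x^2v+v\,\tilde\xi$ driven by the worthy martingale measure $\tilde\xi=(\sigma(u)/u)\xi$ (\S\ref{sec:partition}), which permits decomposing $u_0\equiv1$ into $O(\e^{\Lambda_1 t^{1/3}})$ compactly supported bumps plus a tail supported outside $(-M,M)$. Each bump is handled by Theorem~\ref{th:SHE-cc}, whose proof combines dissipation of the $L^1$ mass at rate $\e^{-ct^{1/3}}$ (Proposition~\ref{prop:L1}) with control of $\|v(t)\|_{L^\infty}/\|v(t)\|_{L^1}$ (Proposition~\ref{prop:WLOG}). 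The tail piece is dispatched because $(S_t v_0^{(M)})(x)$ is \emph{doubly} exponentially small for $|x|\le \e^{\Lambda_1 t^{1/3}}$, and this overwhelms the $\e^{Ak^3t}$ growth in the Kolmogorov estimate --- precisely the domination your chaining lacks. The $t^{1/3}$ in the theorem is inherited from the $L^1$-dissipation rate, which is the sharpest input currently available for general $\sigma$; if your $\e^{-\gamma t}$ lemma were true it would yield a strictly stronger conclusion than the paper proves.
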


Barlow and Taylor \cite{BT1989,BT1992} introduced a notion of macroscopic
Hausdorff dimension of a subset $E$ of $\R^d$. We can appeal to their 
dimension in order to shed some light on the content
of Theorem \ref{th:SHE}. In order to do that, let us first 
define $\mathcal{V}_n=(-\e^n\,,\e^n]^d$,
$\mathcal{S}_0=\mathcal{V}_0$, and $\mathcal{S}_{n+1}=\mathcal{V}_{n+1}\setminus
\mathcal{V}_n$ for all $n\in\Z_+$, and refer to (for $x\in\R^d$ and $r>0$) 
$Q=[x_1\,,x_1+r)\times\cdots\times[x_d\,,x_d+r)$
as an upright box with southwest corner $x$ and sidelength $\text{side}(Q)=r$.
Let  $\nu_\rho^n(E) =\inf\sum_{i=1}^m ( \text{side}(Q_i)/\e^{n})^\rho,$
where the infimum is taken over all upright boxes $Q_,\ldots,Q_m$ of side $\ge 1$
that cover $E\cap \mathcal{S}_n$. The \emph{Barlow-Taylor macroscopic Hausdorff
dimension} of $E$ is defined as the quantity
\[
	\Dim E = \inf\left\{ \rho>0:\
	\sum_{n=1}^\infty \nu^n_\rho(E)<\infty\right\},
\]
where $\inf\varnothing=0$. 

Khoshnevisan, Kim, and Xiao \cite{KKX2} have shown that  the tall peaks of $u$  form 
complex macroscopic space-time multifractals in the sense that 
there exist nonrandom numbers $A>a>0$ and $b,\varepsilon>0$ such that
\[
	2-A\beta^{3/2}\le
	\Dim\left\{ (\e^{t/\vartheta}\,, x) \in \R_+\times \R : \, u(t \,,x) \geq  \e^{\beta t} \right\}
	\le 2-a\beta^{3/2}\qquad\text{a.s.,}
\]
for every $\beta>b$ and $\vartheta\in(0\,,\varepsilon\beta^{-3/2})$. 
 In the above, we are using the convention
that ``$\Dim E<0$'' means that $E$ is bounded.

Among other things, this fact and the definition of
the macroscopic Hausdorff dimension together imply 
that there a.s.\ exist tall peaks of height $\e^{\beta t}$ over an interval of 
size $\asymp\e^{ t/\vartheta}$ for some $\vartheta>0$ and $\beta>b>0$ on an unbounded set
of times $t\gg1$; more precisely,  
\begin{equation}\label{unbdd}
 \left\{ t>0:\, \sup_{|x|\leq \exp(1+[ t/\vartheta] )} u(t\,, x) \geq \e^{\beta t}\right\}
 \text{ is a.s.\ unbounded}.
\end{equation}
Consider a small but fixed number $h_0\in(0\,,1)$ and define for every $t>0$,
\[
	\mathscr{L}(t) := \sup\left\{ \ell>0:\ \sup_{|x|\le\ell} u(t\,,x) \le h_0\right\},
\]
where $\sup\varnothing=0$. We may think of the interval $(-\mathscr{L}(t)\,,\mathscr{L}(t))\subset\R$
as the \emph{valley at time $t$} that straddles the origin.
Note that this valley might be empty at some time $t>0$, in which case $\mathscr{L}(t)=0$.
Because
\[
	\left\{ t>0:\, \sup_{|x|\leq \exp(1+[t/\vartheta])} u(t\,, x) \geq \e^{\beta t}\right\}
	\subseteq
	\left\{ t>0:\, \sup_{|x|\leq \exp(1+[t/\vartheta])} u(t\,, x) > h_0\right\},
\]
it follows from \eqref{unbdd} that $\mathscr{L}(t) \le \exp(1+[t/\vartheta])$ for an unbounded set of times
$t\gg1$. Since $\vartheta\in(0\,,\varepsilon\beta^{-3/2})$ and $\beta>b>0$
are arbitrary, we learn from this endeavor that
\begin{equation}\label{L:UB}
	\liminf_{t\to\infty} t^{-1} \log \mathscr{L}(t) \le  b^{3/2}\varepsilon^{-1}
	<\infty\qquad\text{a.s.}
\end{equation}
This is the best known upper bound to date, but is likely not sharp.
In the case of the parabolic Anderson model [$\sigma(z)=z$]
Das and Tsai \cite{DasTsai} have developed much sharper large-deviations
estimates than those in \cite[Proposition 3.1]{KKX2}. It might be possible to combine 
the Das-Tsai estimates instead of Proposition 3.1 of Khoshnevisan, Kim, and Xiao \cite{KKX2},
together with the remaining arguments of \cite{KKX2}, in order
to improve this to prove that the constant $b$ can be chosen arbitrarily. 
If this were so, then it would imply that \eqref{L:UB} might hold for every $b>0$
and hence $\liminf_{t\to\infty} t^{-1}\log\mathscr{L}(t)=0$ a.s.\
when $\sigma(z)\equiv z$.

In any case, Theorem \ref{th:SHE} assures us of the following complementary result: 
\[
	\liminf_{t\to\infty} t^{-1/3}\log \mathscr{L}(t)\ge \Lambda_1^{1/3}>0
	\qquad\text{a.s.,}
\]
and moreover tells us the solution is 
$\le\exp(-\Lambda_2 t^{1/3})$ everywhere in that valley at all sufficiently large times. 

Recently, Ghosal and Yi \cite{GY} have shown  that, in the case of the parabolic Anderson model
[$\sigma(z)\equiv z$],
$\Dim\{ (t\,, x) \in \R_+\times \R \, ; \, u(t\, ,x) \leq \e^{-\alpha t} \}=2$
a.s.\ provided that $\alpha$ is sufficiently small.
Their result is not 
about the length of the valleys. Rather, it tells us that there are many 
points $(t\,, x)$ where the solution is exponentially small. 
Intermittency could in principle imply that the supremum of the solution over a 
valley is much larger than its smallest, or even typical, value over the same valley.  We currently do not
know whether or not this is true however.

The strategy for the proof of Theorem \ref{th:SHE} is as follows:  We first decompose  
the initial profile $u_0\equiv1$ as 
\[ u_0(x)= \sum_{i=-M}^{M-1} v^{(i)}_0(x) + v_0^{(M)}(x),\]
where $v^{(i)}_0$ and $v^{(M)}_0$  are continuous and non-negative 
functions such that the support of $v^{(i)}$ is in $[i-1\,, i+1]$
and the support of $v^{(M)}_0$ is in $\R\setminus(-M\,, M)$. 
We prove that the solution $u$ to \eqref{eq:SHE} with $u_0\equiv 1$ can in
turn be decomposed  as
\[ 
	u(t\,, x) = \sum_{i=-M}^{M-1} v^{(i)}(t\,, x) +  v^{(M)}(t\,, x),
\]
where $v^{(i)}$ and $v^{(M)}$ satisfy parabolic Anderson models driven by
certain worthy martingale measures --- see \S\ref{sec:partition} and especially
\eqref{eq:SHE2} --- and starting from respective initial functions  $v^{(i)}_0$ and $v^{(M)}_0$. 
Once we establish this,  we freeze the time variable $t$ and appeal to 
the preceding decomposition with $M:=M(t)= 2 R(t)$,
where 
\[
	R(t) =\exp\left( -\Lambda_1 t ^{1/3}\right)\qquad\text{for all $t>0$}.
\]
On one hand, since $v^{(M)}_0=0$ for $|x|\leq 2R(t)$, we are able 
to  show that $\sup_{|x| \leq R(t)} v^{(M)}(t\,, x)$ is extremely small with very high probability. On 
the other hand,  when $i<M$,  the initial profile of $v^{(i)}$
has a compact support, and we can use the following theorem 
in order to prove that the global supremum of $v^{(i)}(t)$ tends rapidly to zero as $t\to\infty$.

\begin{theorem} \label{th:SHE-cc}
	Let $v$ solve the SPDE  \eqref{eq:SHE2} below with a continuous 
	and non-negative initial function $v_0$  that satisfies 
	$\limsup_{|x|\to\infty} x^{-2} \log v_0(x) <0$, keeping in mind 
	the convention $\log0=-\infty<0$.
	Then, there exists a non-random
	number $\Lambda_3 = \Lambda_3(\Lp_\sigma\,,\lip_\sigma)>0$ 
	and an a.s.-finite random time $T$ such that
	\[
		\sup_{x\in \R}v(t\,, x) \le \exp\left(-\Lambda_3t^{1/3}\right)
		\qquad\text{for all $t>T$}.
	\]
\end{theorem}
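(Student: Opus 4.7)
The plan is to combine a weighted $L^p$ moment estimate for $v$ that propagates the Gaussian decay of $v_0$ with a Kolmogorov chaining argument for the spatial supremum, and then conclude via Borel--Cantelli.

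Starting from the mild form of \eqref{eq:SHE2} and the Burkholder--Davis--Gundy inequality (whose constant for the $p$-th moment is proportional to $\sqrt p$), for every $p\ge 2$ I would derive the Volterra-type inequality
\[
\|v(t,x)\|_p^2 \le 2(S_t v_0)(x)^2 + C\,p\,\lip_\sigma^2 \int_0^t\!\!\int_\R p_{t-s}(x-y)^2\,\|v(s,y)\|_p^2\,\d y\,\d s.
\]
The hypothesis $\limsup_{|x|\to\infty}x^{-2}\log v_0(x)<0$ yields $(S_tv_0)(x)\le C(1+t)^{-1/2}\exp(-\kappa x^2/(1+t))$ for some $\kappa>0$. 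The natural Gaussian ansatz $\|v(t,x)\|_p\le A_p(t)\exp(-\kappa x^2/(1+t))$ then closes the Volterra inequality, via the explicit Gaussian-convolution computation $\int p_{t-s}(x-y)^2 e^{-2\kappa y^2/(1+s)}\,\d y$, into a one-dimensional singular-kernel Volterra inequality for $A_p(t)$ with kernel $(t-s)^{-1/2}$; a generalized Gr\"onwall iteration produces $A_p(t)\le C\exp(c\,p^2 t)$.

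Setting $\lambda_t:=\exp(-\Lambda_3 t^{1/3})$, Markov's inequality then gives a pointwise tail
\[
\P\bigl(v(t,x)>\lambda_t\bigr)\le \exp\!\bigl(p\Lambda_3 t^{1/3} + c\,p^3 t - p\kappa x^2/(1+t)\bigr).
\]
Optimizing in $p$: in the outer range $|x|^2/(1+t)\gg t^{1/3}$ the Gaussian weight dominates and yields a tail of order $\exp(-c\,t^{1/3})$, while the complementary inner range $|x|\lesssim t^{2/3}$ requires a separate argument (see below). Once a pointwise tail of this order is available uniformly on $\R$, a Kolmogorov--Chentsov chaining over a dyadic grid of the polynomial window $|x|\le t^N$ (the complement being controlled directly by the Gaussian factor), using the standard H\"older-$(1/2-\varepsilon)$ regularity of $v$ in $x$ with quantitative moduli of continuity, upgrades the pointwise tail to one on $\sup_{x\in\R}v(t,x)$ of the same order. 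A Borel--Cantelli argument along $t_n=n^3$ together with a short interpolation using the time-regularity of $v$ then concludes the statement for every large $t$.

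The main obstacle is the inner region $|x|\lesssim t^{2/3}$. Because $\E v(t,x)=(S_tv_0)(x)\ge c/\sqrt t$, one has $\|v(t,x)\|_p\ge c/\sqrt t$ for every $p\ge 1$, so no positive-moment/Markov argument alone can produce a tail faster than polynomial at a fixed $x$ in this range. The stretched-exponential rate $\exp(-\Lambda_3 t^{1/3})$ must therefore be extracted from the intermittent character of $v$---the fact that the \emph{typical} value is much smaller than the \emph{mean}---by combining the weighted $L^2$ estimate with the two-sided Lipschitz structure \eqref{eq:lip-sigma-condition}, in particular the lower bound $|\sigma(u)|\ge \Lp_\sigma|u|$, in a small-ball / Paley--Zygmund argument that produces a stretched-exponential concentration of $v(t,x)$ near zero. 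Making this quantitative for a generic Lipschitz $\sigma$ satisfying \eqref{LL}, rather than the explicitly linear case $\sigma(u)=u$ where a Hopf--Cole or Feynman--Kac representation would give direct access to log-moments, is the principal technical hurdle.
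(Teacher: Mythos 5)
There is a genuine gap, and you have in fact located it yourself: the inner region $|x|\lesssim t^{2/3}$, where $\E v(t,x)=(S_tv_0)(x)$ is only polynomially small, is precisely where the theorem lives, and your proposal offers only the hope that a ``small-ball / Paley--Zygmund argument'' will produce stretched-exponential concentration of $v(t,x)$ below its mean. No such pointwise concentration estimate is supplied, and none is known for general Lipschitz $\sigma$ satisfying \eqref{LL}; indeed the paper never proves one. So the outer-region and chaining parts of your argument are sound but peripheral, while the core of the proof is missing.

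The paper's route avoids pointwise tails entirely and is worth contrasting with your plan. It controls the supremum by the \emph{total mass}: Proposition \ref{prop:WLOG} shows that, except on an event of probability $\le c_1\e^{-c_2n}$, one has $\|v(t)\|_{L^\infty}\le n^\beta\|v(t)\|_{L^1}$ for all $t\in[0,n]$ (this is where the weighted moment estimates, factorization, and the quasi-Markov structure of Lemma \ref{lem:SMP} are spent). The stretched exponential then comes from the fact that $t\mapsto\|v(t)\|_{L^1}$ is a strictly positive continuous $L^2$-martingale whose quadratic variation is bounded below by ${\rm L}_\sigma^2\int_0^t\|v(s)\|_{L^2}^2\,\d s$; the dissipation theorem of Chen, Cranston, Khoshnevisan, and Kim \cite{CCKK} converts this forced growth of quadratic variation into $\E\sqrt{\|v(t)\|_{L^1}}\le C\e^{-Ct^{1/3}}$, and Doob's supermartingale maximal inequality plus Borel--Cantelli over unit time intervals finish the proof. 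Both the exponent $1/3$ and the role of the lower bound ${\rm L}_\sigma$ enter only through this global martingale mechanism — a genuinely different (and essentially the only available) substitute for the pointwise concentration your sketch would require. If you want to complete your approach, you should replace the unresolved inner-region step by exactly this total-mass argument.
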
 
Lemma \ref{lem:u=v} below ensures that the SPDE \eqref{eq:SHE2} is a generalization of our original
SPDE \eqref{eq:SHE}. Therefore, Theorem \ref{th:SHE-cc} implies that if the initial
data of the SPDE \eqref{eq:SHE} is non-negative
and  has subgaussian tails, then the global supremum of
the solution to \eqref{eq:SHE} vanishes at least as rapidly as $\exp(-\Lambda_2t^{1/3})$
as $t\to\infty$. That is, a specialization of Theorem \ref{th:SHE-cc}
implies the second announced result in the Abstract of the paper: With probability one,
$\mathscr{L}(t)=\infty$
for all sufficiently large $t$.

In order to prove Theorem \ref{th:SHE-cc} we first show  that
$\sup_{x\in \R} v(t\,, x)$ can be controlled by the total mass 
$\|v(t)\|_{L^1(\R)}$ of $v$; this is done in \S\ref{sec:control}. One may see a similar 
result in our earlier paper \cite{KKM} where we consider a stochastic heat equation driven 
by space-time white noise on the one-dimensional torus $\R/\Z$ rather than on $\R$.  Because 
$\R$ is not compact, we need to make significant 
modifications to the method of \cite{KKM} especially when  we estimate  moments; see 
\S\ref{sec:moment}.  Once we  are able to prove that
$\sup_{x\in \R} v(t\,, x)$ can be controlled by $\|v(t)\|_{L^1(\R)}$, we
appeal to a known result 
about dissipation of the total mass of the solution (see Chen, Cranston, Khoshnevisan, and Kim
\cite{CCKK}) to prove 
Theorem \ref{th:SHE-cc}; this is done in \S\ref{sec:dissipation}. 
Finally, we combine the results from \S\S\ref{sec:partition}-\ref{sec:dissipation} 
in order to verify Theorem \ref{th:SHE} in \S\ref{sec:main-proof}.

We conclude the Introduction by setting forth some notation that will be
used throughout the paper.  In order to simplify some of the formulas, we distinguish between the spaces
$L^k$ and $L^k(\P)$ by writing the former as 
\begin{equation*}
	L^k := L^k[\R] \qquad [1\le k<\infty].
\end{equation*}
Thus, for example, if $f\in L^k[\R]$ for some $1\le k<\infty$, then
\begin{equation*}
	\|f\|_{L^k} := \left[ \int_{-\infty}^\infty |f(x)|^k\,\d x\right]^{1/k}.
\end{equation*}
We will abuse notation slightly and write
\begin{equation*}
	\|f\|_{L^\infty}:= \sup_{x\in \R}|f(x)|,
\end{equation*}
in place of the more customary essential supremum.

The $L^k(\P)$-norm
of a random variable $Z\in L^k(\P)$ is denoted by
\begin{equation*}
	\|Z\|_k := \left\{ \E\left(|Z|^k\right)\right\}^{1/k}
	\qquad\text{for all $1\le k<\infty$.}
\end{equation*}
On multiple occasions we refer to $C_b(\R)$
as the collection of bounded and continuous 
real-valued functions on $\R$, and to $C_b^+(\R)$ as the cone
of all nonnegative elements of $ C_b(\R)$. Finally,
we follow Shiga \cite{Shiga}
and define $C^+_{\textit{rap}}(\R)$ to be the set of functions in $C_b^+(\R)$ that satisfy 
the rapid decrease condition
$\limsup_{|x|\to\infty}|x|^{-1}\log v_0(x) =-\infty$.

\section{A partition of  the stochastic heat equation} \label{sec:partition}
Suppose we write the initial function $u_0 \in L^\infty$ as $u_0= v_0+w_0$ and call $v$ and $w$ the solutions to \eqref{eq:SHE} with respective
initial functions $v_0$ and $w_0$.  Since \eqref{eq:SHE} may not be linear, 
except when $\sigma(u)=u$, there is no reason to believe that $u= v + w$
in general. Instead we show in this section that $u=v+w$ where $v$ and $w$ 
solve the closely related stochastic heat equations \eqref{eq:SHE2} below with 
respective initial functions $v_0$ and $w_0$. 
In other words, we plan to show that, to a certain extent, the semilinear SPDE \eqref{eq:SHE}
always has a kind of ``linear dependence on the initial data.'' We will see later  that
this kind of linear dependence on initial data suffices for our needs thanks to 
condition \eqref{LL}.

To implement our splitting, we first rewrite \eqref{eq:SHE} so that it looks more 
like the linear parabolic Anderson equation; that is, we write
\begin{equation}  \label{eq:SHE0}\begin{split}
	&\partial_tu(t\,, x) = \tfrac12 \partial_x^2u(t\,, x) 
	       +  u(t\,,x)\tilde{\xi}(t\,,x) \quad \text{for }t>0, x\in \R, \\
	&\text{subject to}\quad u(0)=u_0\quad\text{on }\R,
\end{split}\end{equation}
where
\begin{equation} \label{eq:def-sigma-tilde}
	\tilde{\xi}(t\,,x) =\tilde{\xi}(t\,,x\,;u) := \tilde{\sigma}(t\,,x)\xi(t\,,x)
	\quad\text{for}\quad
	\tilde \sigma(t\,, x) = \tilde \sigma(t\,, x\,;u): = \frac{\sigma(u(t\,, x))}{u(t\,, x)}.
\end{equation}
Thanks to \eqref{>0}, the random function $\tilde{\sigma}$, and hence
the random distribution $\tilde\xi$, are well defined.

From  \eqref{eq:lip-sigma-condition} we may conclude that, with probability one,
\begin{equation*}
	{\rm L}_\sigma \le |\tilde{\sigma}(t\,,x)| \le \lip_\sigma
	\qquad\text{for every }(t\,,x)\in[0\,,\infty)\times\R.
\end{equation*}
Therefore, in the sense of Walsh \cite{Walsh}, we can regard the new noise
$\tilde{\xi}$ as a worthy martingale measure with a dominating measure 
that is bounded below and above by constant multiples of Lebesgue measure.   More precisely,
if
\[ 
	M_t(\varphi):= \int_{(0, t)\times \R} \varphi(s\,, y)  \, \tilde\xi(\d s \, \d x) = 
	\int_{(0, t)\times\R} \varphi(s\,, y) \frac{\sigma(u(s\,, y))}{u(s\,, y)} \, \xi(\d s\, \d y),
\] 
then  \eqref{eq:lip-sigma-condition} implies that  
for all $t>0$ and for all nonnegative $\varphi,\psi\in C_c(\R_+\times\R)$,
\begin{equation}\label{eq:worthy}
	{\rm L}^2_\sigma \int_0^t \int_{-\infty}^\infty \varphi(s\,, y) \psi(s\,, y)  \, \d y \, \d s 
	\leq \langle M(\varphi)\,, M(\psi) \rangle_t \leq \lip_{\sigma}^2  
	\int_0^t \int_{-\infty}^\infty \varphi(s\,, y) \psi(s\,, y) \, \d y \, \d s. 
\end{equation} 
We emphasize  that the upper and lower bounds on $\langle M(\varphi), M(\psi) \rangle_t$ 
are not random and, in particular, do not depend on $u$. 

Now choose and fix some $v_0\in C^+_b(\R)$ and
consider solutions $v$ to the following parabolic Anderson model forced by
the martingale measure $\tilde{\xi}$:
\begin{equation} \label{eq:SHE2}
\begin{split}
	&\partial_t v(t\,, x) 
		= \tfrac12 \partial_x^2 v(t\,, x) 
	       +  v(t\,,x)\, \tilde{\xi}(t\,,x)\quad \text{for }t>0,  x\in \R, \\
	&\text{subject to}\quad v(0) =v_0 \quad \text{on }\R. 
\end{split}\end{equation}
Similar to what was done \eqref{eq:mild}, we can define a mild solution to \eqref{eq:SHE2} as 
\begin{equation} \label{eq:mild-v}
v(t\,, x)=\int_{-\infty}^\infty p_t(x-y) v_0(y) \, \d y+\int_{(0,t)\times\R}p_{t-s}(x-y)
v(s\,, y) \, \tilde \xi(\d s\,  \d y).
\end{equation}
As was mentioned before, $\tilde\xi$ is a worthy  martingale measure thanks to \eqref{eq:worthy},  
and therefore the stochastic integral in \eqref{eq:mild-v} can be understood   in the sense of Walsh \cite{Walsh}. In addition, since \eqref{eq:SHE2} is linear in $v$, we may use  Walsh's theory  \cite{Walsh}
--- see also   Shiga \cite[Theorems  2.2 and 2.3]{Shiga} ---
in order to  conclude that \eqref{eq:SHE2} has a unique mild solution $v$ 
that is  a.s.\ non-negative and  continuous on $[0\,,\infty)\times\R$. Therefore,
the uniqueness theorem for such SPDEs implies the following.

\begin{lemma}\label{lem:u=v}
	If $u$ denotes the  solution to \eqref{eq:SHE} 
	with the initial function $v_0 \in C^+_b(\R)$  and $v$
	denotes the solution to \eqref{eq:SHE2} with the same initial function $v_0$, then
	$u=v$ almost surely.
\end{lemma}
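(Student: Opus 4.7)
The plan is to show that $u$ itself satisfies the mild formulation \eqref{eq:mild-v} of \eqref{eq:SHE2}, and then to conclude via the uniqueness theorem for the linear SPDE \eqref{eq:SHE2}.

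First I would appeal to the strict positivity \eqref{>0} to get that, almost surely, $u(s,y)>0$ at every $(s,y)\in(0,\infty)\times\R$. From the very definition \eqref{eq:def-sigma-tilde} of $\tilde\sigma$ this yields the pointwise identity
\[
	\sigma(u(s,y))=u(s,y)\,\tilde\sigma(s,y;u)\qquad\text{for every }(s,y)\in(0,\infty)\times\R,
\]
on a set of full probability. Since $u$ is continuous and adapted, both $\sigma(u(s,y))$ and $u(s,y)\tilde\sigma(s,y;u)$ define the same predictable integrand. Substituting this identity into the mild formulation \eqref{eq:mild} of \eqref{eq:SHE} with initial function $v_0$, and recalling that $\tilde\xi(\d s\,\d y)=\tilde\sigma(s,y)\xi(\d s\,\d y)$ by \eqref{eq:def-sigma-tilde}, we obtain
\[
	u(t,x)=(S_tv_0)(x)+\int_{(0,t)\times\R}p_{t-s}(x-y)\,u(s,y)\,\tilde\xi(\d s\,\d y).
\]
Thus $u$ is a mild solution of \eqref{eq:SHE2} in the sense of \eqref{eq:mild-v}.

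Next, I would observe that the stochastic integral above is legitimate in the sense of Walsh: by \eqref{eq:lip-sigma-condition} the multiplier $\tilde\sigma$ is a.s.\ bounded between ${\rm L}_\sigma$ and $\lip_\sigma$, so $\tilde\xi$ is a worthy martingale measure whose dominating measure is controlled by constant multiples of Lebesgue measure, as recorded in \eqref{eq:worthy}. Because \eqref{eq:SHE2} is linear in its unknown and its noise is a worthy martingale measure with deterministic comparison bounds on the dominating measure, the Walsh \cite{Walsh} theory — or equivalently Theorems~2.2 and~2.3 of Shiga \cite{Shiga} applied conditionally on the martingale measure $\tilde\xi$ — gives existence and pathwise uniqueness of a continuous, a.s.\ nonnegative mild solution for the initial datum $v_0\in C_b^+(\R)$.

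Since $u$ and $v$ are both continuous nonnegative mild solutions of \eqref{eq:SHE2} with the same initial datum and the same driving worthy martingale measure $\tilde\xi$, pathwise uniqueness forces $u=v$ almost surely. The only delicate point is the passage from the stochastic integral against $\xi$ with integrand $\sigma(u)$ to the stochastic integral against $\tilde\xi$ with integrand $u$; this is justified, on the full-measure event where $u>0$ on $(0,\infty)\times\R$, by the definition of $\tilde\xi$ and the predictability of both integrands, so I do not expect it to pose a genuine obstacle.
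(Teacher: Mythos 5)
Your proposal is correct and follows essentially the same route as the paper: the positivity \eqref{>0} gives the pointwise identity $\sigma(u)=u\,\tilde\sigma$, so $u$ satisfies the mild formulation \eqref{eq:mild-v}, and the conclusion follows from the pathwise uniqueness of the mild solution to the linear equation \eqref{eq:SHE2} driven by the worthy martingale measure $\tilde\xi$ (via Walsh/Shiga), exactly as the paper argues. Your write-up simply makes explicit the substitution step that the paper leaves implicit.
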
 

Before we move on, let us pause to summarize the philosophy of the construction of this section up to this point:
$u$ solves the original SPDE \eqref{eq:SHE} starting from non-negative $u_0\in L^\infty$. With $u$ fixed in our minds,
we may solve \eqref{eq:SHE2} for every $v_0\in C^+_b(\R)$.  Lemma \ref{lem:u=v} assures us
that $v=u$ if $v_0=u_0$. However,  it should be clear also that
$u$ and $v$ can differ when $u_0\neq v_0$.
%
%\begin{proof}
%We first use  the mild formulation of $u$ and $v$ to get 
%\[ u(t\,, x)-v(t, x)=\int_{(0,t)\times \R} p_{t-s}(y-x) \frac{\sigma(u(s,  y))}{u(s\,, y)} \left(u(s\,, y) - v(s, y) \right) \xi(\d s \, \d y).\]
%Our conditions directly imply that, for all $(t,x)\in[0,\infty)\times\R$,  we 
%have $E\left[|u(t,x)-v(t,x)|^2\right]<\infty$.  
%Now Walsh's extension of Ito's isometry and \eqref{eq:lip-sigma-condition} imply that 
%\begin{equation} \label{eq:diff:u-v}
%\begin{aligned} 
%\E\big[|u(t\,, x)&-v(t, x)|^2\big] \\
%&=\E \int_0^t \d s  \int_{-\infty}^\infty \d y\ p_{t-s}^2(y-x) \frac{\sigma^2(u(s\,, y))}{u^2(s, y)} \left|u(s\,, y) - v(s, y) \right|^2 \\
%&\leq \lip_\sigma^2 \int_0^t \d s   \int_{-\infty}^\infty \d y \  p_{t-s}^2(y-x) \E  \left|u(s\,, y) - v(s, y) \right|^2.
%\end{aligned}
%\end{equation}
%Define 
%$\mathcal{E}(\alpha):=\sup_{t\geq 0}\sup_{x\in \R} \e^{-\alpha t}\E|u(t\,, x)-v(t, x)|^2$
% for $\alpha>0$. Arguing as in Gronwall's inequality, we see that \eqref{eq:diff:u-v} says that  
%\[ \mathcal{E}(\alpha) \leq \mathcal{E}(\alpha)\,  \lip_\sigma^2 \int_0^\infty \e^{-\alpha s} p_{2s}(0) \d s  = \frac{\mathcal{E}(\alpha)\,  \lip_\sigma^2}{2\sqrt{\alpha}}.\] 
%By taking $\alpha$ large enough, we get 
%\[ 
%\sup_{t\geq 0}\sup_{x\in \R} \E|u_t(x)-v_t(x)|^2 = 0,
%\]
%and thus the continuity of $u$ and $v$ implies the given claim. 
%\end{proof}
%
The following remark describes how we intend to use this observation
in conjunction with Lemma \ref{lem:u=v}.

\begin{remark}\label{rem:partition}
	Since \eqref{eq:SHE2} is linear in $v$,  we can partition the solution 
	$u$ to \eqref{eq:SHE} with the initial function $u_0\equiv1$. 
	Indeed, a partition of unity enables to  write $u_0\equiv1$ as 
	\[ 
		u_0= \sum_{i=-M}^{M-1} v^{(i)}_0 + v_0^{(M)},
	\]
	where $v^{(i)}_0,v^{(M)}_0:\R\to\R_+$  are  continuous and non-negative functions 
	such that $v^{(i)}_0$ is supported in $[i-1\,, i+1]$ for $i=-M,\ldots,M-1$, and 
	$v^{(M)}_0$ is supported in $\R\setminus(-M\,, M)$.  
	Because the SPDE \eqref{eq:SHE2} is linear, we may superimpose solutions and
	appeal to Lemma \ref{lem:u=v} in order to decompose $u$ as follows:
	\[ 
		u(t\,, x) = \sum_{i=-M}^{M-1} v^{(i)}(t\,, x) +  v^{(M)}(t\,, x)
		\qquad\text{for all $t\ge0$ and $x\in\R$},
	\]
	where $v^{(-M)},\ldots, v^{(M)}$ satisfy  the SPDE \eqref{eq:SHE2} 
	with respective initial functions  $v^{(-M)}_0,\ldots, v^{(M)}_0$. 
\end{remark}
Let us conclude this section with a remark about the strong Markov property
(henceforth, denoted by SMP).
Let $u$ denote the solution to \eqref{eq:SHE} subject to $u_0\in L^\infty$.
It is well known that $\{u(t)\}_{t\ge0}$ is a diffusion with values in the
space $C(\R)$. In order to write down exactly
what this means, we need to first introduce some measure-theoretic notation:
For every $t\ge0$ and $x\in\R$ define
\begin{equation}\label{omega}
	\omega(t\,,x) =  \int_{(0,t)\times(-x_-,x_+)}\d\xi,
\end{equation}
where $x_-=-\min(0\,,x)$ and $x_+=\max(0\,,x)$, and the integral is defined in the sense
of Wiener. 
Elementary properties of
the Wiener integral show that $\omega$ is a Brownian sheet indexed by $\R_+\times\R$
and, as such, $\omega\in C(\R_+\times\R)$ a.s.; see Walsh \cite[Chapter 1]{Walsh}.
Moreover, the Brownian filtration
$\mathcal{F}$ is nothing but the filtration
generated by the infinite-dimensional Brownian motion $\{\omega(t)\}_{t\ge0}$.

We use a standard relabeling from measure
theory in order to be able to assume, without any loss in generality, that the underlying
probability space is $\Omega=C(\R_+\times\R)$ on which $\omega$ acts as a coordinate
function. In this way, every random variable on $\Omega$ is a Borel function of the coordinate 
functions $\omega$. We omit the remaining measure-theoretic details. Instead
we observe that the distribution-valued random variable $\xi$ is therefore also
a function of $\omega$, as is shown in \eqref{omega}. 

We may define a shift operator $\theta(t):\Omega\to\Omega$ for every $t\ge0$ as follows:
$(\theta(t)\omega)(s\,,x)=\omega(s+t\,,x)$ for all $s\ge0$ and $x\in\R$.
This induces a shift $\theta(t)X$ on every random variable $X$ via
$\theta(t) X (\omega)=X(\theta(t)\omega)$. If $\tau$ is a stopping time with respect 
to the filtration $\mathcal{F}$, then the random shift $\theta\circ\tau$ is well defined:
We simply define $(\theta\circ\tau)(t)(\omega) = \theta(\tau(t)(\omega))$
for every $t\ge0$ and $\omega\in\Omega$. We might write $\theta(\tau)$ in place
of $\theta\circ\tau$.

It is not hard to check that if $\tau$ is a finite stopping time with respect to
$\mathcal{F}$, then $\theta(\tau)\xi$ is a copy of $\xi$ that is independent of $\mathcal{F}_{\tau}$, where
the latter $\sigma$-algebra is defined in the usual sense.
The SMP of $u$ can now be cast as the slightly stronger assertion that
the space-time random field $\theta(\tau)u$ solves \eqref{eq:SHE}
where the space-time white noise $\xi$ is replaced by the space-time white noise $\theta(\tau)\xi$.

Let now $v_0\in C^+_b(\R)$ be nonrandom, and define $v$ to be the solution to
\eqref{eq:SHE2} starting from $v_0$. Because $\tilde{\sigma}$ in \eqref{eq:def-sigma-tilde}
is random, more specifically it is a mapping from $\Omega$ to $\Omega$, the process $\{v(t)\}_{t\ge0}$ does not
satisfy the SMP (though it is an adapted random field). We have introduced the measure-theoretic
notation above in order to discuss how the lack of the SMP of $\{v(t)\}_{t\ge0}$ can be mostly salvaged.

Direct inspection  leads to the following whose proof is omitted as it
follows well-known argument; see Da 
Prato and Zabczyk \cite[\S9.2]{DZ14}.

\begin{lemma}\label{lem:SMP}
	Let $v$ denote the solution to \eqref{eq:SHE2} starting from a non-random
	$v_0\in C^+_b(\R)$. Then, $\{(u(t)\,,v(t))\}_{t\ge0}$ is a diffusion
	with values in the space $C(\R\,,\R^2)$.
	Moreover, for every finite stopping time $\tau$,
	$\theta(\tau)v$ solves \eqref{eq:SHE2} with $(v_0\,,\tilde{\xi})$ replaced by
	$(\theta(\tau)v\,,\theta(\tau)\tilde{\xi})$ and the underlying Brownian filtration
	replaced by $\mathcal{F}_{\tau+\bullet}$.
\end{lemma}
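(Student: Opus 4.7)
The plan is to enlarge the state space so that the pair $(u,v)$ itself solves a well-posed system of SPDEs driven by the single underlying white noise $\xi$, and then to invoke the standard diffusion/SMP apparatus for such systems. Combining \eqref{eq:mild} and \eqref{eq:mild-v}, the pair $(u,v)$ satisfies the coupled mild system
\begin{align*}
	u(t\,, x) &= (S_t u_0)(x) + \int_{(0,t)\times\R} p_{t-s}(x-y)\, \sigma(u(s\,, y))\, \xi(\d s\,\d y),\\
	v(t\,, x) &= (S_t v_0)(x) + \int_{(0,t)\times\R} p_{t-s}(x-y)\, \frac{\sigma(u(s\,, y))}{u(s\,, y)}\, v(s\,, y)\, \xi(\d s\,\d y).
\end{align*}
The coefficient $F(u,v) := (\sigma(u)\,, \sigma(u)v/u)$ is locally Lipschitz in $(u,v)$ on the half-space $\{u>0\}$, and \eqref{>0} guarantees that the $u$-component never leaves this region; Walsh--Dalang theory then yields a unique $C(\R\,,\R^2)$-valued, jointly continuous mild solution of this coupled system.

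From this joint well-posedness one obtains the ordinary Markov property of $\{(u(t)\,,v(t))\}_{t\ge 0}$ by the standard restart argument: for fixed $t\ge 0$, the mild equations on $[t\,,t+s]$ can be rewritten as the same coupled system, now with initial data $(u(t)\,,v(t))$ and driving noise equal to the restriction of $\xi$ to $(t\,,\infty)\times\R$, which is independent of $\mathcal{F}_t$. Pathwise uniqueness then forces the conditional law of $\{(u(t+s)\,,v(t+s))\}_{s\ge 0}$ given $\mathcal{F}_t$ to depend on the past only through $(u(t)\,,v(t))$. Combined with continuity in $(t\,,x)$, this yields the $C(\R\,,\R^2)$-valued diffusion asserted in the first claim.

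For the strong Markov property at a finite stopping time $\tau$, I would use the classical dyadic approximation $\tau_n := 2^{-n}\lceil 2^n\tau\rceil$, which decreases to $\tau$ and takes only countably many deterministic values. On each event $\{\tau_n = k/2^n\}\in\mathcal{F}_{k/2^n}$, the ordinary Markov property applied at the deterministic time $k/2^n$ tells us that $\theta(k/2^n)\xi$ is a copy of $\xi$ independent of $\mathcal{F}_{k/2^n}$ and that $\theta(k/2^n)v$ satisfies \eqref{eq:mild-v} with $(v_0\,,\tilde\xi)$ replaced by $(v(k/2^n)\,,\theta(k/2^n)\tilde\xi)$; summing over $k$ gives the same statement at $\tau_n$. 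Letting $n\to\infty$ and using continuity of $t\mapsto v(t)\in C(\R)$ together with right-continuity of $\mathcal{F}$ identifies the limiting filtration as $\mathcal{F}_{\tau+\bullet}$ and yields the stated SMP. The main technical obstacle throughout is the singularity of $\sigma(u)/u$ at $u=0$, which obstructs a direct appeal to a global Lipschitz SPDE theorem; I would resolve this in the standard way by truncating the ratio as $(\sigma(u)/u)\mapsto \sigma(u\vee\varepsilon)/(u\vee\varepsilon)$, solving the truncated coupled system for each $\varepsilon>0$, and then sending $\varepsilon\downarrow 0$, invoking \eqref{>0} to identify the limit --- in the same spirit as Da~Prato--Zabczyk \cite[\S9.2]{DZ14}.
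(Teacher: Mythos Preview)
Your proposal is correct and is precisely the kind of argument the paper has in mind: the paper in fact omits the proof of this lemma entirely, stating only that it ``follows well-known argument; see Da~Prato and Zabczyk \cite[\S9.2]{DZ14},'' which is exactly the reference you invoke at the end. Your sketch --- enlarge the state to the pair $(u,v)$, observe that it solves a coupled mild system driven by the single white noise $\xi$, deduce the Markov property by the restart argument plus pathwise uniqueness, and upgrade to the SMP via dyadic approximation of $\tau$ --- is the standard route and matches the paper's coordinate-function/shift-operator setup.

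One small remark: what you call the ``singularity'' of $\sigma(u)/u$ at $u=0$ is not a blow-up --- condition \eqref{eq:lip-sigma-condition} already forces ${\rm L}_\sigma\le|\sigma(u)/u|\le\lip_\sigma$ --- but rather a possible failure of Lipschitz continuity of the ratio at $0$. Your truncation-and-limit device (or simply working on $\{u>0\}$ and invoking \eqref{>0}) handles this, and is consistent with how the paper treats $\tilde\sigma$ as a bounded random coefficient rather than as part of a globally Lipschitz nonlinearity.
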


In order to see how this lemma salvages a portion of
the SMP of the process $t\mapsto v(t)$, 
let us define for all non-random functions $\varphi\in C_c(\R_+\times\R)$ and all $t>0$,
\[
	\tilde{M}_t(\varphi) = \int_{(0,t)\times\R} \varphi(s\,,y)\,\tilde{\xi}(\d s\,\d y)
	=\int_{(0,t)\times\R} \frac{\varphi(s\,,y)\sigma(u(s\,,y))}{u(s\,,y)}\,\xi(\d s\,\d y),
\]
the second identity being a consequence of the definition in \eqref{eq:def-sigma-tilde}.
The basic properties of the Walsh stochastic integral ensure that $\tilde{M}$ defines
a worthy martingale measure whose dominating measure satisfies \eqref{eq:worthy}
with exactly the same constants as does $M$.
The basic properties of Walsh stochastic integrals show that $\theta(\tau)\tilde{M}$
is the martingale measure that correspond to the noise $\theta(\tau)\tilde{\xi}$ that
arose in Lemma \ref{lem:SMP}. The martingale measure $\theta(\tau)\tilde{M}$
is worthy and satisfies \eqref{eq:worthy} as well, also with exactly the same
constants as does $M$. Because our work with $\tilde\xi$ does not involve  
knowing the law of $\tilde\xi$, rather its property
\eqref{eq:worthy} only, it follows that many of the properties of $\tilde\xi$ that we study here
are (typically ``up to constants'') the same as those properties for $\theta(\tau)\tilde\xi$.
And therefore the same can be said of $v$ and $\theta(\tau)v$: They do not always have the
same law (if this were the case, then this would be the SMP), rather 
they have the same properties (typically ``up to constants''). 

\section{Control of tall peaks by total mass} \label{sec:control}

In  this section, we consider  the  solution $v$ to the stochastic heat equation \eqref{eq:SHE2}
that is driven by the worthy martingale measure $\tilde\xi$.  
We write the mild formulation for \eqref{eq:SHE2} in the same manner as in  Walsh \cite{Walsh}. Namely, for all $t>0$ and $x\in\R$,
\begin{equation}\label{mild}
	v(t\,,x) = (S_t v_0)(x) +\mathcal{I}(t\,,x) ,
\end{equation}  
where $\{S_t\}_{t\ge0}$ designates the heat semigroup --- see \eqref{semigroup} ---  and 
\begin{equation}\label{eq:noise}
	\mathcal{I}(t\,,x)  :  =\int_{(0,t)\times\R}
	p_{t-s}(x\,,y) v(s\, , y)   \, \tilde \xi(\d s\,\d y).
\end{equation}

The principal aim of this section  is to prove  the following proposition 
which basically says that the tallest peak height $\|v(t)\|_{L^\infty}$ of $v$ at time $t$
can be controlled by the total mass $\|v(t)\|_{L^1}$ of $v$;
this fact will play a role in the proof of Theorem \ref{th:SHE-cc}. 
It might help to recall from Introduction that $C^+_{\textit{rap}}(\R)$ denotes the set of all
functions in $C_b^+(\R)$ that decay at least exponentially rapidly at 
$\pm\infty$.

\begin{proposition}\label{prop:WLOG}
	Assume that $v_0 \in C^+_{\textit{rap}}(\R)$.  For every 
	$\gamma \in \left(\frac43\,, 2\right]$ and  $\beta \geq 6/(3\gamma-4)$
	there exist numbers $c_1,c_2>0$ --- that only depend on $\gamma, \beta, \lip_\sigma$ ---  
	such that
	\begin{equation}\label{cond:WLOG}
		\P\left\{ \sup_{0\le t\le n}\frac{\|v(t)\|_{L^\infty}}{\|v(t)\|_{L^1}}
		\geq n^\beta \right\} < c_1 \exp( - c_2 n)
		\qquad\text{for every $n\ge1$}.
	\end{equation}
\end{proposition}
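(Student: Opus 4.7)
The plan is to translate the ratio $\|v(t)\|_{L^\infty}/\|v(t)\|_{L^1}$ into a control on the spatial H\"older seminorm of $v(t,\cdot)$, and then combine the moment estimates of Section~\ref{sec:moment} with a Kolmogorov--Centsov chaining argument and a high--moment Markov inequality.

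First I would establish a deterministic peak--to--mass inequality. For any non-negative continuous $f:\R\to\R_+$ that attains its supremum and has finite spatial $\alpha$-H\"older seminorm
$H_\alpha(f):=\sup_{x\ne y}|f(x)-f(y)|/|x-y|^\alpha$, the bound $f\ge \tfrac12\|f\|_{L^\infty}$ holds on an interval of radius $\sim(\|f\|_{L^\infty}/H_\alpha(f))^{1/\alpha}$ centered at a maximizer. Integrating this yields
\[
\|f\|_{L^1}\ge c_\alpha\,\|f\|_{L^\infty}^{1+1/\alpha}\,H_\alpha(f)^{-1/\alpha},
\qquad\text{i.e.,}\qquad
\frac{\|f\|_{L^\infty}}{\|f\|_{L^1}}\le C_\alpha\left(\frac{H_\alpha(f)}{\|f\|_{L^1}}\right)^{1/(\alpha+1)}.
\]
Applied to $f=v(t,\cdot)$, which is continuous and positive by \eqref{>0} and in $L^1$ by the rapid decay of $v_0$, the claim reduces to bounding
$\P\{\sup_{0\le t\le n}H_\alpha(t)/\|v(t)\|_{L^1}\ge Cn^{\beta(\alpha+1)}\}$
by $c_1 e^{-c_2 n}$, with $H_\alpha(t):=H_\alpha(v(t,\cdot))$ and some fixed $\alpha\in(0,1/2)$.

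Next I would invoke the moment estimates of Section~\ref{sec:moment}. These are obtained via Walsh's BDG inequality applied to $\mathcal I(t,x)$ in \eqref{eq:noise} --- legitimate because $\tilde\xi$ is a worthy martingale measure with the domination \eqref{eq:worthy} --- together with the rapid decay of $v_0$ to compensate for the non-compactness of $\R$, and yield bounds of the form
\[
\left\|\frac{v(t,x)-v(s,y)}{\|v(t)\|_{L^1}}\right\|_k \le C_k\,n^{a_k}\,\bigl(|x-y|^\alpha+|t-s|^{\alpha/2}\bigr),\qquad s,t\in[0,n],\ x,y\in\R,\ k\ge 2,
\]
for each $\alpha\in(0,1/2)$, with constants and exponents that grow only polynomially in $k$. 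A Kolmogorov--Centsov chaining over $[0,n]\times\R$, truncating $|x|$ at a scale like $e^{cn}$ (permissible by the rapid decay of $v_0$), then converts these into
$\P\{\sup_{0\le t\le n}H_\alpha(t)/\|v(t)\|_{L^1}\ge R\}\le C_k\,n^{b_k}\,R^{-k}$
for each $k$; taking $k$ linearly in $n$ together with $R=Cn^{\beta(\alpha+1)}$ produces the $\exp(-c_2 n)$ decay. The balance between the polynomial-in-$n$ moment growth, the H\"older exponent $\alpha$, and the threshold $R$ yields the constraint $\beta\ge 6/(3\gamma-4)$ for $\gamma\in(4/3,2]$, where $\gamma$ corresponds to the effective moment order at which the Markov step is optimized.

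The main obstacle is obtaining the moment estimate above. On the torus, as in \cite{KKM}, the analog is relatively clean because the domain is compact. On $\R$ one must carefully track how the estimates behave at spatial infinity and compensate using the $C^+_{\textit{rap}}(\R)$ decay of $v_0$; preserving spatial uniformity for the $L^1$-normalized increments while retaining only polynomial-in-$n$ dependence is the technical heart of Section~\ref{sec:moment}, and is precisely where the argument departs in substance from the torus case.
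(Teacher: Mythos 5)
Your deterministic peak--to--mass inequality is correct and the overall reduction is appealing, but the probabilistic step it rests on does not hold, and this is a genuine gap rather than a technicality. You assert moment bounds for the ($L^1$-normalized) increments over all of $s,t\in[0,n]$ ``with constants and exponents that grow only polynomially in $k$'' and ``polynomial-in-$n$ moment growth.'' The actual estimates of \S\ref{sec:moment} are exponential in $k^2t$ and $k^3t$: Lemma \ref{lem:moment} gives $\|v(t,x)\|_k^2\le A\|v_0\|_{L^\infty}(S_tv_0)(x)\exp(Ak^2t)$ and Lemma \ref{lem:tp2} carries a factor $\exp(ck^3t)$. This is intermittency and cannot be improved. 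Taking $k$ linear in $n$ over a time window of length $n$ therefore produces factors like $\exp(cn^3)$ or $\exp(cn^4)$, which no polynomial threshold $R=Cn^{\beta(\alpha+1)}$ can beat in a Markov inequality. This is precisely why the paper's proof never takes high moments over $[0,n]$: it introduces the stopping time $\tau(n)=\inf\{t:\|v(t)\|_{L^\infty}\ge n^{\beta/2}\|v(t)\|_{L^1}\}$, restarts via the strong Markov property of the pair $(u,v)$ (Lemma \ref{lem:SMP}), normalizes the restarted data to $\|\tilde V_0\|_{L^1}=1$, $\|\tilde V_0\|_{L^\infty}=N=n^{\beta/2}$, and then works only on the short window $[0,N^{-\gamma}]$, where $k=N^{(3\gamma-4)/2}$ and $t=N^{-\gamma}$ make $k^2t=N^{2\gamma-4}\le1$ bounded. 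The argument is then iterated $\ell_N\asymp N^{\gamma+2/\beta}$ times, and the constraint $\beta\ge 6/(3\gamma-4)$ arises from requiring $N^{(3\gamma-4)/3}=n^{(3\gamma-4)\beta/6}\ge n$, not from optimizing a moment order.

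The second gap is the random denominator. The quantity $\|v(t)\|_{L^1}$ is a strictly positive martingale that in fact decays like $\exp(-ct^{1/3})$ (Proposition \ref{prop:L1}), so $\bigl\|(v(t,x)-v(s,y))/\|v(t)\|_{L^1}\bigr\|_k$ is not controlled by the ratio of a moment bound on increments to a deterministic lower bound on mass; you would need a quantitative lower-tail estimate for $\|v(t)\|_{L^1}$ valid uniformly over $[0,n]$, which is as hard as the proposition itself. The paper sidesteps this by proving (Proposition \ref{pr:valleys}, via the martingale property and the domination \eqref{eq:worthy}) that on each short window the mass drops by at most a factor of $2$ except on an event of probability $\exp(-N^{(3\gamma-4)/3}/L)$, and by pairing this with Proposition \ref{pr:peaks}, which shows the peak at most doubles on the same window; the $L^1\to L^\infty$ smoothing there comes directly from $\|S_tv_0\|_{L^\infty}\lesssim t^{-1/2}\|v_0\|_{L^1}$ plus the factorization estimate for the stochastic convolution, with no need for a H\"older seminorm. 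To salvage your route you would have to prove your H\"older-seminorm tail bound by exactly this stopping-time, short-window, iterate-via-SMP scheme, at which point the detour through $H_\alpha$ buys nothing.
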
  

A similar result can be found  in our earlier paper \cite[Theorem 3.1]{KKM}, valid
in the case that the spatial domain is the torus $\R/\Z$ instead of $\R$.\footnote{Actually, Theorem
3.1 of \cite{KKM} is about SPDEs over $\R/(2\Z)$, but it is clear from the proof
that any other torus would also work.}
Although the proof of Proposition \ref{prop:WLOG} borrows  liberally from the ideas of
our earlier paper ({\it ibid.}), there also are several significant differences. Perhaps the first obvious 
difference is that the spatial domain is now $\R$, which is not compact. The
change from $\R/\Z$ to $\R$ requires  making several non-trivial modifications to our
earlier arguments, especially when we estimate the moments of the solution.  Those modifications
involve ``factorization'' ideas from semigroup theory; see Da Prato, 
Kwapie\'{n}, and Zabczyk \cite{DPKZ87}, and in particular, 
Cerrai \cite{Cerrai} and Salins \cite{Salins}.

There is another  difference between the proof of Proposition \ref{prop:WLOG}
and the earlier methods of \cite{KKM}. Namely, the proof of Theorem 3.1 of \cite{KKM} 
hinged on the SMP of  the solution $u$ to \eqref{eq:SHE}.  In the following, those arguments will be applied to $v$
by appealing to Lemma \ref{lem:SMP}.
Among other things, if $\tau$ is a stopping time for the Brownian filtration $\mathcal{F}$, 
then we condition on $\mathcal{F}_{\tau}$ to see that
$\tilde{v}=v(\tau(\omega)+\bullet)$ solves the parabolic Anderson model, 
\begin{equation} \label{eq:tilde v} 
\begin{aligned}
	& \partial_t \tilde v  = \tfrac{1}{2} \partial_x^2 \tilde v
		+\tilde v\theta(\tau)\tilde \xi, \\
	&\text{subject to  }\tilde v(0)= v(\tau).
 \end{aligned}
 \end{equation}
The point being that
that $\theta(\tau)\tilde\xi$  defines a worthy martingale measure 
with a  dominating measure that is bounded below and above by the same
constant multiples of Lebesgue measure as did $\tilde\xi$; see \eqref{eq:worthy}.

\subsection{Moment estimates} \label{sec:moment}
  We first estimate the moments of the solution $v(t\,,x)$   to \eqref{eq:SHE2}
  at a fixed point $(t\,,x)\in(0\,,\infty)\times\R$. It might help to recall that
  $\{S_t\}_{t\ge0}$ denotes the heat semigroup; see \eqref{semigroup}.

\begin{lemma}\label{lem:moment} 
There exists a real number  $A:=A(\lip_\sigma)>0$ such that 
\begin{equation}
	\|v(t\,, x)\|_k^2 \leq A \|v_0\|_{L^\infty} (S_tv_0)(x) \exp\left(A k^2 t\right),
\end{equation}
uniformly for all $t>0$, $x\in \R$, all nonnegative functions $v_0\in L^\infty$, and $k\ge2$. 
\end{lemma}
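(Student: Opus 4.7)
The plan is to establish the bound by a Picard iteration of the mild formulation, using Burkholder--Davis--Gundy for the stochastic convolution, the upper bound on the dominating measure of $\tilde\xi$ from \eqref{eq:worthy}, and a sharp heat-kernel identity that preserves the $(S_tv_0)(x)$ factor throughout the iteration.

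First I would set $v^{(0)}(t,x)=(S_tv_0)(x)$ and, recursively, $v^{(n+1)}(t,x)=(S_tv_0)(x)+\int_{(0,t)\times\R}p_{t-s}(x-y)v^{(n)}(s,y)\,\tilde\xi(\d s\,\d y)$, noting that $v^{(n)}\to v$ in $L^k(\P)$ locally by standard Walsh theory. Applying BDG (with constant of order $\sqrt{k}$), Minkowski's integral inequality, and the upper bound $\lip_\sigma^2$ from \eqref{eq:worthy}, one obtains the recursive moment inequality
\[
\|v^{(n+1)}(t,x)\|_k^2 \le 2\bigl((S_tv_0)(x)\bigr)^2+2Ck\lip_\sigma^2\int_0^t\!\!\int_\R p_{t-s}(x-y)^2\,\|v^{(n)}(s,y)\|_k^2\,\d y\,\d s.
\]

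The crucial step is a heat-kernel identity: a direct computation using the Chapman--Kolmogorov relation gives $\int_\R p_{t-s}(x-y)^2(S_sv_0)(y)\,\d y=\frac{1}{2\sqrt{\pi(t-s)}}(S_{(t+s)/2}v_0)(x)$, and comparing Gaussian densities yields $p_{(t+s)/2}(z)\le\sqrt{2}\,p_t(z)$ for $0\le s\le t$, so the left-hand side is at most $(2\pi(t-s))^{-1/2}(S_tv_0)(x)$. Combining this with the base inequality $((S_tv_0)(x))^2\le\|v_0\|_{L^\infty}(S_tv_0)(x)$, I would prove by induction on $n$ that
\[
\|v^{(n)}(t,x)\|_k^2\le H_n(t)\,\|v_0\|_{L^\infty}(S_tv_0)(x),
\]
where $H_0\equiv1$ and $H_{n+1}(t)=2+C'k\lip_\sigma^2\int_0^t H_n(s)(t-s)^{-1/2}\,\d s$ for a constant $C'$ depending only on $\lip_\sigma$.

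Finally, the monotone limit $H(t)=\lim_{n\to\infty}H_n(t)$ solves the singular Volterra equation $H(t)=2+C'k\lip_\sigma^2\int_0^t H(s)(t-s)^{-1/2}\,\d s$; a Laplace-transform computation identifies $H(t)=2E_{1/2}(C'k\lip_\sigma^2\sqrt{\pi t})$, which is bounded by $C_1\exp(C_2k^2t)$ with $C_1,C_2$ depending only on $\lip_\sigma$. Passing $n\to\infty$ in the induction then yields the stated inequality. The main technical obstacle is the heat-kernel comparison step, which, unlike the torus case of \cite{KKM}, is needed precisely because the inhomogeneity $(S_tv_0)(x)$ varies spatially over the non-compact line $\R$; without the sharp comparison $p_{(t+s)/2}\le\sqrt{2}\,p_t$ the iteration would lose the localizing factor $(S_tv_0)(x)$ after a single step.
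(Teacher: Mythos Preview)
Your argument is correct and follows the same overall strategy as the paper: Picard iteration, a BDG bound on the stochastic convolution using the upper bound in \eqref{eq:worthy}, and a heat-kernel estimate that preserves the factor $(S_tv_0)(x)$ through every step of the recursion.

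The only noteworthy difference is in bookkeeping. You compute the exact identity $\int_\R p_{t-s}(x-y)^2(S_sv_0)(y)\,\d y=\tfrac{1}{2\sqrt{\pi(t-s)}}(S_{(t+s)/2}v_0)(x)$ and then compare $p_{(t+s)/2}\le\sqrt2\,p_t$ to recover $(S_tv_0)(x)$; the paper simply bounds $p_{t-s}(z)^2\le(2\pi(t-s))^{-1/2}p_{t-s}(z)$ and applies the semigroup property $\int p_{t-s}(x-y)(S_sv_0)(y)\,\d y=(S_tv_0)(x)$ directly, which yields the same inequality with less work. More substantively, instead of carrying the time-dependent functions $H_n(t)$, solving the singular Volterra equation, and invoking Mittag--Leffler asymptotics, the paper divides the recursive inequality by $\e^{\beta t}(S_tv_0)(x)$, sets $\Phi_n=\sup_{t,x}\e^{-\beta t}\|v^{(n)}(t,x)\|_k^2/(S_tv_0)(x)$, and obtains the scalar recursion $\Phi_n\le2\|v_0\|_{L^\infty}+8k\lip_\sigma^2(2\beta)^{-1/2}\Phi_{n-1}$; choosing $\beta=Ak^2$ makes the factor $\le\tfrac12$ and gives $\sup_n\Phi_n\le4\|v_0\|_{L^\infty}$ immediately. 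Your route is a little more explicit about the time dependence, while the paper's exponential-weight normalization is shorter and avoids special functions.
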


\begin{proof}
	We develop  some of the ideas of Foondun and Khoshnevisan \cite{FK}.  
	Let us consider Picard iteration: Define a sequence $\{ v^{(n)}\}_{n\geq 0}$ as 
	\[
		v^{(0)}(t\,, x):= (S_t v_0)(x)
		\quad\text{and}\quad
		v^{(n)}(t\,, x) := (S_t v_0)(x) + \mathcal{I}^{(n)}(t\,, x),
	\]
	for all $t>0$, $x\in\R$, and $n\in\N$, where 
	\[
		\mathcal{I}^{(n)}(t\,, x):= \int_{(0,t)\times\mathbb{\R}} 
		p_{t-s}(x\,,y)\, v^{(n-1)}(s\,,y)\, \tilde \xi(\d s\,\d y).
	\]
	The random field $v^{(n)}$ is the $n$th-stage 
	Picard-iteration approximation of $v$. Next we follow Walsh 
	\cite[Ch.\ 3]{Walsh} and obtain the following: For every $T>0$
	and $k\in[2\,,\infty)$,
	\begin{equation}\label{Picard}
		\lim_{n\to\infty}\adjustlimits\sup_{t\in(0,T]}\sup_{x\in\mathbb{T}}
		\E\left( \left| v^{(n)}(t\,, x) - v(t\,,x) \right|^k \right)=0.
	\end{equation}
	We appeal to a Burkholder-Davis-Gundy type inequality for stochastic 
	convolutions \cite[Proposition 4.4, p.\ 36]{cbms} and the worthy condition
	\eqref{eq:worthy} on the underlying noise $\tilde\xi$ in order to see that 
	\begin{equation}\label{role of noise}
		\left\| v^{(n)}(t\,, x)   \right\|_k^2 \leq 2 \left|(S_t v_0)(x) \right|^2 +  
		8k \lip_\sigma^2 \int_0^t \d s \int_{-\infty}^\infty \d y \, 
		[p_{t-s} (y-x)]^2  \left\| v^{(n-1)}(s\,, y)   \right\|_k^2. 
	\end{equation}
	It might help to pause and observe that $(S_t v_0)(x)>0$ for all $t>0$ and $x\in \R$.
	Let $\beta>0$ be a fixed parameter, and divide both sides of the preceding display 
	by $\exp(\beta t)\, (S_t v_0)(x)$ in order to find that
	\begin{align*}
		&\frac{\e^{-\beta t}\left\| v^{(n)}(t\,, x)   \right\|_k^2 }{(S_t v_0)(x)} \\ 
		&\leq 2 (S_t v_0)(x) + \frac{8k\lip_\sigma^2 }{(S_t  v_0)(x)} \int_0^t \d s \, 
			\e^{-\beta(t-s)}\int_{R} \d y \,  [p_{t-s} (y-x)]^2 (S_s v_0)(y)\,  
			\frac{\e^{-\beta s}   \left\| v^{(n-1)}(s\,, y)   \right\|_k^2}{(S_s v_0)(y)} \\
		& \leq 2\|v_0\|_{L^\infty} + \frac{8k\lip_\sigma^2}{(S_t  v_0)(x)} 
			\int_0^t \d s \,  \frac{\e^{-\beta(t-s)}}{\sqrt{2\pi (t-s)}} 
			\int_{-\infty}^\infty \d y \,  p_{t-s}(y-x)  (S_s v_0)(y)\,  \frac{\e^{-\beta s}
			\left\| v^{(n-1)}(s\,, y)   \right\|_k^2}{(S_s v_0)(y)}. 
	\end{align*}
	In the last inequality above, we use the fact that 
	$|S_t v_0(x)| \leq \|v_0\|_{L^\infty}$ and $p_t(z) \leq (2\pi t)^{-1/2}$ for all $z\in \R$. Define 
	extended real numbers $\Phi_0,\Phi_1,\ldots$ as
	\[
		\Phi_n: = \sup_{t> 0} \sup_{x\in \R} \frac{\e^{-\beta t}\left\| v^{(n)}(t\,, x)
		\right\|_k^2 }{(S_t v_0)(x)} \quad \text{for every $n\in\Z_+$}.
	\]
	The semigroup property of $\{S_t\}_{t\ge0}$ implies that,  for all 
	$n\in\N$, $t>0$, and $x\in \R$, 
	\begin{align*} 
		\frac{\e^{-\beta t}\left\| v^{(n)}(t\,, x)   \right\|_k^2 }{(S_t v_0)(x)} 
			&\leq 2\|v_0\|_{L^\infty} +  \frac{8k\lip_\sigma^2}{(S_t  v_0)(x)}\, 
			\Phi_{n-1} \int_0^t \d s \,  \frac{\e^{-\beta s}}{\sqrt{2\pi s}}
			\int_{-\infty}^\infty \d y \,  p_{t-s}(y-x)  (S_s v_0)(y) \\
		&=2\|v_0\|_{L^\infty} + 8k\lip_\sigma^2\, \Phi_{n-1} 
			\int_0^\infty \d s \,  \frac{\e^{-\beta s}}{\sqrt{2\pi s}} \\
		&=2\|v_0\|_{L^\infty} + \frac{8k\lip_\sigma^2  }{\sqrt{2\beta}} \,\, \Phi_{n-1}. 
	\end{align*}
	Since $v^{(0)}(t\,, x)= (S_t v_0)(x)$,  we have
	$\Phi_0 \leq \|v_0\|_{L^\infty}$. This  implies that all of the
	$\Phi_n$s are finite.   In addition, if we choose $\beta:=A k^2$
	for some constant $A:=A(\lip_\sigma)$, then we apply induction to the preceding
	in order find that
	\[
		\Phi_n  \leq 2\|v_0\|_{L^\infty} + \tfrac{1}{2} \Phi_{n-1}
		\qquad\text{for every $n\in\N$.}
	\]
	This implies that  $\Phi_n \leq  4\|v_0\|_{L^\infty}$ for all $n\in\Z_+$.
	Therefore, \eqref{Picard} and Fatou's lemma together yield
	\[  
		\adjustlimits\sup_{t\geq 0} \sup_{x\in \R} \frac{\e^{-Ak^2 t}
		\| v(t\,, x) \|_k^2 }{(S_t v_0)(x)} \leq 4\|v_0\|_{L^\infty},
	\]
	and hence complete the proof. 
\end{proof}

 We now  have  the following moment estimate of the noise term [see \eqref{eq:noise}]: 

\begin{lemma}\label{lem:tp2}
	For every  $0<\theta<\frac14$
	there exists a number $c=c(\theta\,,\lip_\sigma)>0$ such that 
	\begin{equation}\label{eq:moment_noise}
		\E\left(  \sup_{s\in[0,t]}\| \mathcal{I}(s) \|_{L^\infty}^k\right)
		\le (ck)^{k/2}\exp(ck^3t)
		\|v_0\|_{L^\infty}^{k-1} \|v_0\|_{L^1} t^{k\theta}, 
	\end{equation}
	uniformly for all $t\in(0\,,1]$,  nonnegative functions $v_0\in L^1\cap L^\infty$, 
	and real numbers $k\in[2\,,\infty)$ that satisfy $k(1-4\theta)>2$. 
\end{lemma}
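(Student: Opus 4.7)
My plan is to adapt the factorization method of Da Prato--Kwapie\'n--Zabczyk \cite{DPKZ87}, which the authors have already flagged, in order to rewrite the stochastic convolution $\mathcal{I}$ as a deterministic space-time integral against a simpler stochastic field $Y$, and then to combine two Hölder inequalities with the moment estimate of Lemma \ref{lem:moment}. Concretely, I would fix $\alpha\in(0\,,1/4)$ and, starting from the Beta identity $\int_{s'}^{\rho}(\rho-r)^{\alpha-1}(r-s')^{-\alpha}\,\d r=\pi/\sin(\pi\alpha)$, combine the heat-semigroup identity with a stochastic Fubini (legal under $\alpha<1/4$ because $\tilde{\xi}$ is worthy with a Lebesgue-comparable dominating measure) to obtain the representation
\[
\mathcal{I}(s\,,x)=\frac{\sin(\pi\alpha)}{\pi}\int_0^s(s-r)^{\alpha-1}(S_{s-r}Y_r)(x)\,\d r,\quad Y_r(z):=\int_{(0,r)\times\R}(r-s')^{-\alpha}p_{r-s'}(z-y)v(s'\,,y)\,\tilde{\xi}(\d s'\,\d y).
\]
This shifts the burden from estimating a stochastic convolution to estimating the spatial $L^k$-norm of $Y_r$.

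Two Hölder applications finish the reduction. First, Hölder in space with conjugate exponents $(k\,,k/(k-1))$ gives $\|S_{s-r}Y_r\|_{L^\infty}\le\|p_{s-r}\|_{L^{k/(k-1)}}\|Y_r\|_{L^k}\le C(s-r)^{-1/(2k)}\|Y_r\|_{L^k}$. Second, Hölder in $r$ with exponents $(k/(k-1)\,,k)$, valid provided $\alpha>3/(2k)$, gives
\[
\|\mathcal{I}(s)\|_{L^\infty}^k\le C^k\,s^{k\alpha-3/2}\int_0^s\|Y_r\|_{L^k}^k\,\d r.
\]
Since $k\alpha\ge 3/2$ (arranged by choosing $\alpha$ close enough to $1/4$), the right-hand side is monotone non-decreasing in $s$, so $\sup_{s\le t}$ can be replaced by the value at $s=t$, and the task reduces to bounding $\int_0^t\E\|Y_r\|_{L^k}^k\,\d r$. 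I would emphasize that the choice $p=k$ in the spatial Hölder is rigid: it is the unique one which, after Fubini on $\R\times\Omega$, produces exactly the combination $\|v_0\|_{L^\infty}^{k-1}\|v_0\|_{L^1}$ of initial-data norms.

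For the moment bound, I would apply the BDG inequality for worthy martingale measures (via \eqref{eq:worthy}), the pointwise estimate $p_{r-s'}^2\le(2\pi(r-s'))^{-1/2}p_{r-s'}$, Lemma \ref{lem:moment}, and the semigroup identity $\int p_{r-s'}(z-y)(S_{s'}v_0)(y)\,\d y=(S_r v_0)(z)$ to get $\|Y_r(z)\|_k^2\le Ck\,\|v_0\|_{L^\infty}(S_r v_0)(z)\exp(Ak^2r)\,r^{1/2-2\alpha}$. Raising to the $(k/2)$-power, using the pointwise bound $(S_r v_0)^{k/2-1}\le\|v_0\|_{L^\infty}^{k/2-1}$, and integrating in $z$ against $\|S_r v_0\|_{L^1}=\|v_0\|_{L^1}$ yield
\[
\E\|Y_r\|_{L^k}^k\le(Ck)^{k/2}\|v_0\|_{L^\infty}^{k-1}\|v_0\|_{L^1}\exp(Ak^3r/2)\,r^{k(1/4-\alpha)}.
\]
Once this is inserted into the previous display, the $\alpha$-dependent powers of $t$ cancel and a single factor $t^{k/4-1/2}$ remains. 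Because $t\le 1$ and $t^{k/4-1/2}\le t^{k\theta}$ iff $k(1-4\theta)\ge 2$, the standing hypothesis absorbs the discrepancy and yields the advertised form $(ck)^{k/2}\exp(ck^3t)\|v_0\|_{L^\infty}^{k-1}\|v_0\|_{L^1}t^{k\theta}$.

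The main obstacle will be the tight interlocking of the parameters: the factorization forces $\alpha<1/4$, the time Hölder forces $\alpha>3/(2k)$, and the choice $p=k$ in the spatial Hölder is essential for producing the correct initial-data combination. A secondary subtlety, absent in the torus setting of \cite{KKM}, is that one cannot replace the $(S_r v_0)(z)$ factor from Lemma \ref{lem:moment} by a volume; instead, an $L^1$--$L^\infty$ interpolation on $S_r v_0$ is what makes the weight $\|v_0\|_{L^\infty}^{k-1}\|v_0\|_{L^1}$ appear naturally. This is precisely where the non-compactness of $\R$ forces the factorization detour rather than the direct Green's-function argument available on $\R/\Z$.
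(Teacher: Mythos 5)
Your proposal is correct and follows the same architecture as the paper's proof: the Da Prato--Kwapie\'n--Zabczyk factorization, a reduction to $\int_0^t\E\|\mathcal{I}_\alpha(r)\|_{L^k}^k\,\d r$ (your $Y_r$ is the paper's $\mathcal{I}_\alpha(r)$), and then BDG, the bound $p_\tau^2\le\tau^{-1/2}p_\tau$, Lemma \ref{lem:moment}, the semigroup identity, and the $L^1$--$L^\infty$ interpolation of $S_rv_0$ producing $\|v_0\|_{L^\infty}^{k-1}\|v_0\|_{L^1}$ --- all exactly as in the paper. The one genuine difference is the middle step: where the paper passes through the Sobolev embedding $H^{\delta,k}(\R)\hookrightarrow L^\infty$, the identification of $H^{\delta,k}$ with the complex interpolation space $[L^k,W^{1,k}]_\delta$, and the smoothing estimate $\|S_\tau f\|_{W^{1,k}}\lesssim\tau^{-1/2}\|f\|_{L^k}$ to get $\|S_\tau[\mathcal{I}_\alpha(s)]\|_{L^\infty}\lesssim\tau^{-\delta/2}\|\mathcal{I}_\alpha(s)\|_{L^k}$, you use the elementary Young/H\"older bound $\|S_\tau f\|_{L^\infty}\le\|p_\tau\|_{L^{k/(k-1)}}\|f\|_{L^k}\lesssim\tau^{-1/(2k)}\|f\|_{L^k}$. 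This buys a real simplification (no Bessel potentials or interpolation spaces, and one fewer free parameter $\delta$) at essentially no cost: your time-H\"older constraint $\alpha>3/(2k)$ and final exponent $t^{k/4-1/2}$ coincide with what the paper's choice $\delta\downarrow 1/k$ would give, and your monotonicity observation handles the $\sup_{s\le t}$ more cleanly than the paper's display \eqref{eq:sup_t}. One caveat you should make explicit: your admissible window $3/(2k)<\alpha<1/4$ is empty for $k\le 6$, so as written your argument only covers $k>6$ and the remaining small-$k$ cases (which, under $k(1-4\theta)>2$, force $\theta<1/6$) are not addressed. The paper's own proof has the same defect --- its parameter choice requires $k>8$ once $\theta\le 3/16$, and the remark that smaller $\theta$ ``holds automatically because $t\in(0,1]$'' does not repair this for small $k$ --- and only large $k$ is used downstream (Proposition \ref{pr:peaks} takes $k=N^{(3\gamma-4)/2}$), so this is a shared blemish rather than a flaw specific to your route; still, you should either restrict the stated range of $k$ or supply a separate crude argument for $k\in[2,6]$.
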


\begin{proof}
	The proof of Lemma \ref{lem:tp2} is similar to results in Cerrai \cite{Cerrai} and 
	Salins \cite[Appendix A]{Salins} that consider SPDE on bounded domains. In order
	to adapt to the present setting where the spatial domain is
	$\R$, we modify the ideas of \cite{Cerrai,Salins} and also keep track of the dependence 
	of constants on $k$ and $t$.  
	
	First, we use the factorization method of Da Prato, 
	Kwapie\'{n}, and Zabczyk \cite{DPKZ87} to write 
	\begin{align*}
	 	\mathcal{I}(t\,, x) &=  \int_{(0,t)\times\R}
			p_{t-s}(x\,,y)\,  v(s\,,y))\,\tilde \xi(\d s\,\d y) \\
		&=\frac{\sin(\alpha \pi)}{\pi} \int_0^t (t-s)^{\alpha-1} 
			\left(S_{t-s} [\mathcal{I}_\alpha (s)]\right)(x) \, \d s,
	\end{align*}
	where $\alpha \in (0\,, 1)$ is arbitrary but fixed, and 
	\[
		\mathcal{I}_\alpha (t\,, x):=
		\int_{(0, t)\times \R}  (t-s)^{-\alpha} p_{t-s}(y-x)  v(s\,, y) \, \tilde \xi(\d s \, \d y).
	\]
	In this way, we find that
	\begin{equation}\label{eq:bound1} 
		\| \mathcal{I} (t)\|_{L^\infty} \leq \frac{1}{\pi} \int_0^t (t-s)^{\alpha-1} 
		\| S_{t-s} [\mathcal{I}_\alpha(s)]\|_{L^\infty} \, \d s. 
	\end{equation} 
	By the Sobolev embedding theorem --- see, for example,
	Grafakos \cite[Theorem 6.2.4]{Grafakos}) ---
	if we choose $\delta \in (0\,, 1)$ and $k\geq 2$ such that  
	\begin{equation}\label{eq:Sobolev}
		\delta k >1,
	\end{equation}
	then 
	\[ 
		\| S_{t-s}[\mathcal{I}_\alpha(s)]\|_{L^\infty}
		\leq C(\delta) \| S_{t-s} [\mathcal{I}_\alpha(s)]\|_{H^{\delta, k}(\R)},
	\]
	where $H^{\delta, k}(\R)$ denotes the space of Bessel potentials
	and $C(\delta)>0$ is a number that depends only on $\delta$. 
	Since $H^{\delta, k}(\R)$ coincides with the complex interpolation space
	$[L^k, W^{1, k}(\R)]_{\delta}$ ---
	see, for example, Lunardi \cite[Example 2.12]{Lunardi})
	--- we may appeal to Corollary 2.8 of Lunardi ({\it ibid.}) in order to 
	be able to say that 
	\[ 
		 \| S_{t-s}[\mathcal{I}_\alpha(s)]\|_{H^{\delta, k}(\R)}
		 \leq \|  S_{t-s}[\mathcal{I}_\alpha(s)]\|_{L^k}^{1-\delta} 
		 \| S_{t-s}[\mathcal{I}_\alpha(s)]\|_{W^{1,k}(\R)}^\delta. 
	\]
	The heat semigroup is a contraction mapping on $L^k$. Therefore,
	\[ 
		\|  S_{t-s}[\mathcal{I}_\alpha(s)]\|_{L^k} \leq \| \mathcal{I}_\alpha(s)\|_{L^k},
	\]
	and a direct calculation shows that there exists some constant $C>0$ 
	--- independent of the parameters $(s\,, t\,, \alpha\,, k\,, \delta)$ --- such that 
	 \[ 
	 	\| S_{t-s} [\mathcal{I}_\alpha(s)]\|_{W^{1,k}(\R)} \leq C (t-s)^{-1/2} \,  
		\| \mathcal{I}_\alpha(s)\|_{L^k}.
	\]
	We can combine our efforts so far in order to obtain the following:
	\begin{equation} \label{eq:sup_bound} 
		\| S_{t-s}[\mathcal{I}_\alpha(s)]\|_{L^\infty}  \leq  C(\delta) 
		(t-s)^{-\delta /2}\, \| \mathcal{I}_\alpha(s)\|_{L^k}.
      \end{equation}
	Therefore, if we assume additionally that
	$\delta < 2\alpha$ and $k\left(\alpha-\frac\delta2\right)>1$,  
	then we deduce from \eqref{eq:bound1}  that 
	\begin{equation}\label{id:above}\begin{aligned}
		  \| \mathcal{I} (t)\|^k_{L^\infty} &\leq  
		  	\left| C(\delta) \int_0^t (t-s)^{\alpha-1-\frac{\delta}{2}} \,  
			\| \mathcal{I}_\alpha(s)\|_{L^k}\,\d s\right|^k\\
		  & \leq [C(\delta) ]^k
		  	\left| \int_0^t (t-s)^{k(\alpha-1-\frac{\delta}{2})/(k-1)} \, \d s \right|^{k-1}
		  	\, \int_0^t  \| \mathcal{I}_\alpha(s)\|_{L^k}^k \, \d s \\
		  &= [C(\delta) ]^k
		  	\left[ \frac{  (k-1)}{k\left(\alpha-\frac{\delta} {2}\right)-1}\right]^{k-1}\, 
		  	t^{k\left(\alpha - \frac{\delta}{2} \right) -1} \,  \int_0^t  \| \mathcal{I}_\alpha(s)\|_{L^k}^k \, \d s. 
	\end{aligned}\end{equation} 
	In the second inequality of \eqref{id:above} 
	we used H\"older's inequality. We now choose $\alpha=\alpha(\delta)$ judiciously 
	--- for instance, $\alpha=2\delta$ --- in order to deduce from the above
	that
	\[
		 \frac{(k-1)}{k\left(\alpha-\frac{\delta}{2}\right)-1} \leq C_1,
	\]
	for a constant $C_1=C_1(\delta)>0$. With our present choice in mind, we find that
	\begin{equation}\label{eq:sup_t}
		 \E \left[ \sup_{0\leq s\leq t}  \| \mathcal{I} (t)\|^k_{L^\infty} \right] \leq 
		[C(\delta) ]^k
		t^{k\left(\alpha - \frac{\delta}{2} \right) -1} \,  \int_0^t  
		\E\left[ \| \mathcal{I}_\alpha(s)\|_{L^k}^k\right] \, \d s.
	 \end{equation} 
	It remains to estimate $\E [\| \mathcal{I}_\alpha(s)\|_{L^k}^k ]$
	for $0<s<t$.
	 
	We can combine the Burkolder-Davis-Gundy  inequality, \eqref{eq:worthy}, and the Minkowskii inequality, 
	in order to see that there exists a number $C:=C(\lip_\sigma)>0$  such that 
	\begin{align*}
		\E \left[|\mathcal{I}_\alpha(t\,, x)|^k\right] &  \leq C^k k^{k/2} 
			\, \E \left| \int_0^t \d s\,  (t-s)^{-2\alpha} \int_{-\infty}^\infty\d y \, 
			\left[p_{t-s}(y-x) \, v(s\,, y)\right]^2   \right|^{k/2} \\
		& \leq C^k k^{k/2}  \left|  \int_0^t \d s \, (t-s)^{-2\alpha} \int_{-\infty}^\infty \d y\, 
			[p_{t-s}(y-x)]^2 \|  v(s\,, y)\|_k^2 \right|^{k/2}\\
		&\leq  C^k k^{k/2}  \left|  \int_0^t \d s \, (t-s)^{-2\alpha-\frac{1}{2}}
			\int_{-\infty}^\infty \d y\ p_{t-s}(y-x) \| v(s\,, y))\|_k^2 \right|^{k/2},
	\end{align*}
	where we used the elementary inequality $\sup_{x\in\R}p_r(x)\le r^{-1/2}$, valid for all $r>0$, in the last line.
	We now can use Lemma \ref{lem:moment} and the 
	contraction property of the heat semigroup on $L^k$  in order to see that if $\alpha<1/4$, then 
	\begin{align}\notag
		\left(\E\int_{-\infty}^\infty  |\mathcal{I}_\alpha(t\,, x)|^k\, \d x\right)^{1/k} &  \lesssim\sqrt k\,
		     \|v_0\|_{L^\infty}^{1/2} \, \exp\left( Ak^2 t\right)  \\\notag
		&\qquad \times\int_{-\infty}^\infty \, \d x  \left|  \int_0^t \frac{\d s}{% 
			(t-s)^{2\alpha+\frac{1}{2}}}\int_{-\infty}^\infty \d y\, p_{t-s}(y-x) 
			\left(S_s v_0\right)(y) \right|^{1/2} \\
		  &\lesssim \sqrt k\,\|v_0\|_{L^\infty}^{1/2} \, \exp\left( Ak^2 t\right)\, 
		  	\left| \int_0^t \frac{\d s}{% 
			(t-s)^{2\alpha+\frac{1}{2}}}\right|^{1/2}  \left\| S_t v_0  \right\|_{L^{k/2}}^{1/2} \\\notag
		  &\lesssim\sqrt k\,
		 	 \|v_0\|_{L^\infty}^{1/2} \, \exp\left( Ak^2 t\right)\, (1-4\alpha)^{-1/2} \,
		  	t^{\left(\frac{1}{2}-2\alpha\right)/2} \, \|v_0\|_{L^{k/2}}^{1/2}\\
		  &\lesssim\sqrt k\,
		  	\, \exp\left( Ak^2 t\right)\,(1-4\alpha)^{-1/2} \, 
			t^{\left(\frac{1}{2}-2\alpha\right)/2} \, \|v_0\|_{L^\infty}^{(k-1)/k} \|v_0\|_{L^1}^{1/k},
			\notag
	\end{align}
	where the implicit constants do not depend on $(t\,,k)$.
	Therefore, \eqref{eq:sup_t} yields real numbers $C_i:=C_i(\delta\,, \alpha)>0$ 
	[$i=1,2$] such that 
	\begin{align*}
		\E \left[ \sup_{0\leq s\leq t}  \| \mathcal{I} (t)\|^k_{L^\infty} \right]& 
			\leq C_1^k k^{k/2}  \, \exp\left( Ak^3 t\right)\,
			t^{k\left(\alpha - \frac{\delta}{2} \right) -1}\,  \|v_0\|_{L^\infty}^{k-1} 
			\|v_0\|_{L^1} \int_0^t s^{\left(\frac{1}{2}-2\alpha\right)k/2} \, \d s\\
		&\leq C_2^k k^{k/2}  \, \exp\left( Ak^3 t\right)\,  
			t^{k\left(\frac{1}{4} - \frac{\delta}{2} \right)} \,  \|v_0\|_{L^\infty}^{k-1} \|v_0\|_{L^1}.  
	\end{align*}
	We now choose $\theta:=\frac{1}{4} - \frac{\delta}{2}$ and $\alpha:=2\delta$ 
	in order to obtain \eqref{eq:moment_noise} for $\theta \in (3/16\,, 1/4)$.
	The lower bound $3/16$ comes from the assumption that $\alpha <1/4$. For smaller 
	values of $\theta$, we merely observe that
	\eqref{eq:moment_noise} holds automatically; this is because $t \in (0\,, 1]$,
	Because  the condition \eqref{eq:Sobolev} is equivalent to $k(1-4\theta)>2$,
	this completes the proof of Lemma \ref{lem:tp2}.
\end{proof}

\begin{lemma}\label{lem:pr2}
	For every $0<\theta<\frac14$ there exists
	$c=c(\theta\,,\lip_\sigma)>0$ such that
	\begin{align*}
		\left\| \|v(t)\|_{L^\infty} \right\|_k
			&\lesssim \sqrt{k}\,\exp( c k^2t) \|v_0\|_{L^\infty}^{1-1/k}\,  t^{\theta}
			+   t^{-1/2},\\
		\left\| \sup_{s\in(0,t)}\|v(s)\|_{L^\infty} \right\|_k
			&\lesssim \sqrt{k}\,\exp( c k^2t) \|v_0\|_{L^\infty}^{1-1/k} \,  t^\theta
			+ \|v_0\|_{L^\infty},
	\end{align*}
	uniformly for all $t\in(0\,,1]$,  all nonnegative functions $v_0\in L^1\cap L^\infty$ 
	that satisfies $\|v_0\|_{L^1}=1$, and all real numbers $k >2/(1-4\theta)$. 
\end{lemma}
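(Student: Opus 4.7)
The plan is to derive Lemma \ref{lem:pr2} essentially as a direct corollary of the mild formulation together with Lemma \ref{lem:tp2}, the only subtlety being that we estimate the deterministic heat-semigroup term $S_t v_0$ in two different ways for the two claimed inequalities.

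First I would start from the mild formulation \eqref{mild}, namely $v(t,x)=(S_tv_0)(x)+\mathcal{I}(t,x)$, and apply the triangle inequality in $L^\infty(\R)$ followed by the triangle inequality in $L^k(\P)$ to obtain
\[
 \bigl\| \|v(t)\|_{L^\infty} \bigr\|_k \le \|S_tv_0\|_{L^\infty} + \bigl\| \|\mathcal{I}(t)\|_{L^\infty} \bigr\|_k,
\]
and analogously
\[
 \Bigl\| \sup_{s\in(0,t)}\|v(s)\|_{L^\infty} \Bigr\|_k \le \sup_{s\in(0,t)}\|S_sv_0\|_{L^\infty} + \Bigl\| \sup_{s\in(0,t)}\|\mathcal{I}(s)\|_{L^\infty} \Bigr\|_k.
\]
Note the deterministic term $\|S_tv_0\|_{L^\infty}$ comes out of the $L^k(\P)$-norm since it is non-random.

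Next, I would estimate the noise term by invoking Lemma \ref{lem:tp2} (valid because the condition $k>2/(1-4\theta)$ is exactly $k(1-4\theta)>2$), taking the $k$th root of \eqref{eq:moment_noise}, and using $\|v_0\|_{L^1}=1$ to see
\[
 \Bigl\| \sup_{s\in(0,t)}\|\mathcal{I}(s)\|_{L^\infty} \Bigr\|_k \lesssim \sqrt{k}\,\exp(ck^2 t)\,\|v_0\|_{L^\infty}^{1-1/k}\, t^{\theta}.
\]
The factor $\exp(ck^3 t)^{1/k}=\exp(ck^2 t)$ is precisely the exponential factor that appears in both claimed inequalities, so this handles the stochastic contributions identically for both bounds.

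The only remaining step is the deterministic bound on $S_sv_0$, and this is where the two claims diverge. For the first inequality, I would use the pointwise bound $p_s(z)\le (2\pi s)^{-1/2}$ together with $\|v_0\|_{L^1}=1$, yielding $\|S_tv_0\|_{L^\infty}\le (2\pi t)^{-1/2}\lesssim t^{-1/2}$, which produces the additive term $t^{-1/2}$. For the second inequality one instead uses the contraction property of the heat semigroup on $L^\infty$, namely $\|S_sv_0\|_{L^\infty}\le \|v_0\|_{L^\infty}$ uniformly in $s\ge 0$, which produces the additive $\|v_0\|_{L^\infty}$ term. Combining these with the stochastic bound above yields both displayed inequalities.

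There is no real obstacle here — the argument is a clean assembly of Lemma \ref{lem:tp2} with the two standard estimates on the heat semigroup. If anything, the only point to be careful about is the bookkeeping of $\|v_0\|_{L^1}$ and $\|v_0\|_{L^\infty}$ exponents through the $k$th root, and the observation that the hypothesis $k>2/(1-4\theta)$ is exactly the one needed to legitimately invoke Lemma \ref{lem:tp2}.
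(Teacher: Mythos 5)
Your proposal is correct and follows the same route as the paper: the authors likewise combine the mild form \eqref{mild} with Lemma \ref{lem:tp2} and the two-sided bound $\|S_tv_0\|_{L^\infty}\lesssim \|v_0\|_{L^\infty}\wedge t^{-1/2}$ (the $t^{-1/2}$ branch for the first inequality, the contraction branch for the second). Your bookkeeping of the $k$th root of \eqref{eq:moment_noise} and of the hypothesis $k>2/(1-4\theta)$ is accurate.
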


\begin{proof}
	Since $\int_{-\infty}^\infty v_0(x)\,\d x=1$ and $\sup_{x\in \R} p_t(x) \leq t^{-1/2}$,
	it follows that $\|S_tv_0\|_{L^\infty}\lesssim \|v_0\|_{L^\infty} \wedge t^{-1/2}$.
	The first portion of the lemma follows from this observation,   Lemma \ref{lem:tp2}, and \eqref{mild}. The second portion follows similarly.
\end{proof}

\subsection{Control of tall peaks and total mass}
In this section we assume that $v$ denotes the unique solution to \eqref{eq:SHE2}, 
and show that the tall peaks and total mass of $v$  do not move much for a short time. 

\begin{proposition}\label{pr:peaks}
	For  $\frac43<\gamma<2$,  there exist $C=C(\gamma\,,\lip_\sigma)>0$ 
	such that
	\begin{align*}
		\P\left\{ \left\| v\left( N^{-\gamma} \right)
		\right\|_{L^\infty}  \ge  N \right\}
		&\le C\exp\left( - N^{(3\gamma-4)/2} \right),\\
		\P\left\{  \sup_{0\le s\le N^{-\gamma} }\|v(s)\|_{L^\infty} \ge 
		2N\right\} & \le C\exp\left( - \tfrac12 N^{(3\gamma-4)/2} \right),
	\end{align*}
	uniformly for all real numbers $N\ge1$, 
	  and all nonnegative functions $v_0$
	that satisfy $\|v_0\|_{L^1}=1$ and $\|v_0\|_{L^\infty}\le N$.
\end{proposition}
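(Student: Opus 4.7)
I would prove both bounds by Chebyshev's inequality applied to the moment estimates of Lemma \ref{lem:pr2}, with the order $k$ chosen as a particular power of $N$. Since $\gamma<2$, the interval $\bigl(\tfrac{3\gamma-4}{4\gamma}\,,\tfrac{1}{4}\bigr)$ is nonempty; fix some $\theta$ in this interval, and set $k:=BN^{(3\gamma-4)/2}$ with $B=B(\gamma,\lip_\sigma)>0$ a large constant to be chosen. With $t:=N^{-\gamma}$ and the hypotheses $\|v_0\|_{L^1}=1$ and $\|v_0\|_{L^\infty}\leq N$, the first inequality of Lemma \ref{lem:pr2} yields
\[
\frac{\bigl\|\,\|v(N^{-\gamma})\|_{L^\infty}\,\bigr\|_k}{N} \lesssim \sqrt{k}\,\exp\bigl(ck^2 N^{-\gamma}\bigr)\,N^{-\gamma\theta}+N^{\gamma/2-1},
\]
after absorbing $N^{-1/k}\leq1$. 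With this choice of $k$ one has $k^2 N^{-\gamma}=B^2N^{2\gamma-4}\to 0$ as $N\to\infty$ (using $\gamma<2$), so the exponential factor stays bounded; the first summand equals $O\bigl(N^{(3\gamma-4)/4-\gamma\theta}\bigr)=o(1)$ by the choice of $\theta$, and the second is $o(1)$ for the same reason $\gamma<2$. Thus the ratio is at most $1/2$ for all $N\geq N_0(\gamma,\lip_\sigma,B)$, and Chebyshev's inequality gives
\[
\P\bigl\{\|v(N^{-\gamma})\|_{L^\infty}\geq N\bigr\} \leq 2^{-k}\leq\exp\bigl(-B\log 2\cdot N^{(3\gamma-4)/2}\bigr).
\]
Choosing $B$ with $B\log 2\geq 1$ delivers the desired bound for $N\geq N_0$, and the trivial estimate $\P\leq 1$ on the bounded range $N<N_0$ is absorbed into the constant $C$.

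For the second bound I would repeat the argument with the supremum-in-time estimate of Lemma \ref{lem:pr2}, dividing now by the threshold $2N$:
\[
\frac{\bigl\|\sup_{s\in(0,N^{-\gamma})}\|v(s)\|_{L^\infty}\bigr\|_k}{2N}\lesssim \tfrac{1}{2}\sqrt{k}\,\exp\bigl(ck^2 N^{-\gamma}\bigr)\,N^{-\gamma\theta}+\tfrac{1}{2}.
\]
The first summand is $o(1)$ exactly as before, so I may fix an $\epsilon>0$ small enough that $\bigl|\log(1/2+\epsilon)\bigr|>1/2$ and then enlarge $N$ so that the ratio is at most $1/2+\epsilon$. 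Chebyshev then produces
\[
\P\!\left\{\sup_{0\leq s\leq N^{-\gamma}}\|v(s)\|_{L^\infty}\geq 2N\right\} \leq (1/2+\epsilon)^k\leq\exp\bigl(-\tfrac{B}{2}\,N^{(3\gamma-4)/2}\bigr),
\]
and taking $B\geq 1$ concludes; small $N$ are again absorbed into the constant.

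The main obstacle is the joint compatibility of three constraints on the pair $(\theta,\alpha)$ with $k\asymp N^\alpha$: that $\exp(ck^2 N^{-\gamma})$ remains bounded (requires $2\alpha\leq\gamma$), that $\sqrt{k}\,N^{-\gamma\theta}\to 0$ (requires $\alpha<2\gamma\theta$), and that $k\to\infty$ (requires $\alpha>0$). The choice $\alpha=(3\gamma-4)/2$ satisfies all three precisely on the range $4/3<\gamma<2$: the upper endpoint $\gamma<2$ is what permits some $\theta<1/4$ to exceed $(3\gamma-4)/(4\gamma)$, while the lower endpoint $\gamma>4/3$ is what makes $\alpha>0$ so that the Chebyshev bound actually decays in $N$. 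The side condition $k>2/(1-4\theta)$ from Lemma \ref{lem:pr2} is automatic once $N$ is large.
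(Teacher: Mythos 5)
Your proposal is correct and follows essentially the same route as the paper: apply Lemma \ref{lem:pr2} with $t=N^{-\gamma}$ and $k\asymp N^{(3\gamma-4)/2}$, check that the exponential factor stays bounded because $k^2t=O(N^{2\gamma-4})\to0$, and conclude by Chebyshev's inequality; your condition $\theta>(3\gamma-4)/(4\gamma)$ is exactly the paper's condition $\gamma(3-4\theta)<4$. The only differences are cosmetic (the extra constant $B$ in the choice of $k$ and the precise Chebyshev thresholds).
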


\begin{proof}
For each fixed  $\gamma \in (4/3\, , 2)$, we can  choose and fix $0<\theta<\frac14$ that satisfies $\gamma(3-4\theta) < 4$.
	We now   apply Lemma \ref{lem:pr2} with $t=N^{-\gamma}$ 
	and $k= N^{(3\gamma-4)/2}$ to see that there exists a
	real number $C_1=C_1(\gamma\,,\theta\,,\lip_\sigma)>0$ such that
	\begin{equation}\label{Joob}\begin{split}
		\left\| \|v(N^{-\gamma})\|_{L^\infty} \right\|_{N^{(3\gamma-4)/2}}
			&\le \tfrac12 C_1N^{\gamma(3-4\theta)/4} + N^{\gamma/2},\\
		\left\| \sup_{0\le s\le N^{-\gamma}}\|v(s)\|_{L^\infty} \right\|_{N^{(3\gamma-4)/2}}
			&\le \tfrac12 C_1N^{\gamma(3-4\theta)/4}+N,
	\end{split}\end{equation}
	uniformly for all large $N$,
	and all nonnegative functions $v_0\in L^1\cap L^\infty$ that satisfy
	$\|v_0\|_{L^1}=1$ and $\|v_0\|_{L^\infty}\leq N$. 	By the first bound in \eqref{Joob} and Chebyshev's inequality, we have 
	\begin{align*}
		\P\left\{ \left\| v\left( N^{-\gamma} \right)
			\right\|_{L^\infty}  \ge N\right\}
			&\le  \E\left( \left| \frac{\left\| v\left( N^{-\gamma} \right)
			\right\|_{L^\infty}}{N}\right|^k\right) \\
		&\le \exp\left( - N^{(3\gamma-4)/2} \right)\qquad\text{for
			all $N\ge \e \left(\tfrac12 C_1N^{\gamma(3-4\theta)/4} + N^{\gamma/2} \right) $}.
	\end{align*}	
	Note that since $\gamma(3-4\theta) < 4$ and $\gamma<2$, there exists $N_0>0$ such that $N\ge \e \left(\tfrac12 C_1N^{\gamma(3-4\theta)/4} + N^{\gamma/2} \right) $ for all $N\geq N_0$. Thus, we find that there exists $C_2=C_2(\gamma\,,\theta\,,\lip_\sigma)>0$ such that
\[
	\P\left\{ \left\| v\left( N^{-\gamma} \right)
	\right\|_{L^\infty}  \ge K_1 N^{\gamma/2} \right\}
	\le K_2\exp\left( - N^{(3\gamma-4)/2} \right)\qquad\text{for
	all $N\ge 1$},
\]
which results in  the first assertion of Proposition \ref{pr:peaks}.
	
For  the second portion of Proposition \ref{pr:peaks}, since 
$\gamma(3-4\theta)/4 \in (0,1 )$, it follows from \eqref{Joob} that for 
every $q>1$ there exists $N_0=N_0(q\,,\gamma\,,\theta\,,\lip_\sigma)>0$ such that
\[
	\left\| \sup_{0\le s\le N^{-\gamma}}\|v(s)\|_{L^\infty} \right\|_{N^{(3\gamma-4)/2}}
	\le qN\qquad\text{for all $N\ge N_0$}.
\]
We  now choose $q=2\exp(-1/2)$, and then use Chebyshev's inequality to get that
\[
	\P\left\{ \sup_{0\le s\le N^{-\gamma}}\|v(s)\|_{L^\infty} 
	\ge 2N \right\}\le \exp\left( -\tfrac12 N^{(3\gamma-4)/2}\right)\qquad\text{for all $N\ge N_0$}.
\]
This implies that  there exists $C_3=C_3(\gamma\,,\theta\,,\lip_\sigma)>0$ such that

\[
	\P\left\{ \sup_{0\le s\le N^{-\gamma}}\|v(s)\|_{L^\infty} 
	\ge 2N \right\}\le 
	C_3\exp\left( -\tfrac12 N^{(3\gamma-4)/2}\right)\qquad\text{for all $N\ge1$},
\]
which completes the proof of Proposition \ref{pr:peaks}.
\end{proof}

\begin{proposition}\label{pr:valleys}
	For every $\frac43<\gamma<2$ there exists $L=L(\gamma\,,\lip_\sigma)>1$ such that
	\[
		\P\left\{ \inf_{0\le t\le N^{-\gamma}} \|v(t)\|_{L^1} 
		\le  \frac{1}{2} \quad \text{or}\   \sup_{0\le t\le N^{-\gamma}}
		\|v(t)\|_{L^1} \ge  2 \right\}
		\le L\exp\left( -\frac{N^{(3\gamma-4)/3}}{L} \right),
	\]
	uniformly for all real numbers $N\ge1$ and all nonnegative functions $v_0\in L^1\cap L^\infty$
	that satisfy $\|v_0\|_{L^1}=1$ and $\|v_0\|_{L^\infty}\le N$.
\end{proposition}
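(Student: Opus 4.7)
My plan is to represent $\|v(t)\|_{L^1}$ as $1$ plus a continuous martingale, and then control its deviations on $[0,N^{-\gamma}]$ by combining the peak bound of Proposition \ref{pr:peaks} with Bernstein's exponential martingale inequality. Since $v\ge 0$ a.s.\ and $\|v_0\|_{L^1}=1$, integrating the mild formulation \eqref{mild} in $x\in\R$, using $\int_\R p_{t-s}(x-y)\,\d x=1$, and applying a stochastic Fubini theorem give
\[
\|v(t)\|_{L^1}\;=\;1+N(t),\qquad N(t):=\int_{(0,t)\times\R}\sigma(v(s,y))\,\xi(\d s\,\d y),
\]
a continuous mean-zero martingale. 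Using \eqref{eq:lip-sigma-condition} and the pointwise bound $v^2\le\|v\|_{L^\infty}v$ (valid because $v\ge 0$), its quadratic variation satisfies
\[
\langle N\rangle_t\;\le\;\lip_\sigma^2\int_0^t\|v(s)\|_{L^\infty}\|v(s)\|_{L^1}\,\d s.
\]

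The next step is a bootstrap via stopping times. Set $\tau:=\inf\{s\ge 0:\|v(s)\|_{L^1}\notin(1/2,2)\}$, $\tau_E:=\inf\{s\ge0:\|v(s)\|_{L^\infty}\ge 2N\}$, and $\bar\tau:=\tau\wedge\tau_E\wedge N^{-\gamma}$. On $\{s<\bar\tau\}$ we have both $\|v(s)\|_{L^\infty}\le 2N$ and $\|v(s)\|_{L^1}\le 2$, so the stopped martingale $M_t:=N(t\wedge\bar\tau)$ enjoys the deterministic bound $\langle M\rangle_\infty\le 4\lip_\sigma^2\,N^{1-\gamma}$. Proposition \ref{pr:peaks} furnishes $\P(\tau_E\le N^{-\gamma})\le C\exp(-\tfrac12 N^{(3\gamma-4)/2})$.

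The event appearing in Proposition \ref{pr:valleys} is exactly $\{\tau\le N^{-\gamma}\}$. On $\{\tau\le N^{-\gamma}\}\cap\{\tau_E>N^{-\gamma}\}$, continuity of $s\mapsto\|v(s)\|_{L^1}$ (together with $\|v(0)\|_{L^1}=1$) forces $\|v(\tau)\|_{L^1}\in\{1/2,2\}$, so $|M_{N^{-\gamma}}|=|N(\tau)|\ge 1/2$. The exponential (Bernstein) inequality for continuous martingales with a.s.-bounded quadratic variation then gives
\[
\P\!\Bigl(\sup_{t\le N^{-\gamma}}|M_t|\ge 1/2\Bigr)\;\le\;2\exp\!\bigl(-N^{\gamma-1}/(32\lip_\sigma^2)\bigr).
\]
Summing the two exceptional probabilities, and observing that for every $\gamma\in(4/3,2)$ both exponents $\gamma-1$ and $(3\gamma-4)/2$ strictly exceed $(3\gamma-4)/3$, one absorbs the constants into a single bound of the form $L\exp(-N^{(3\gamma-4)/3}/L)$ by choosing $L=L(\gamma,\lip_\sigma)$ large enough (taking $L$ larger still renders the bound trivial for $N$ in any bounded range near $1$).

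The step I expect to cost the most work is the stochastic Fubini identity underlying the representation $\|v(t)\|_{L^1}=1+N(t)$. Although morally immediate from the mass-preservation property $\int_\R p_{t-s}=1$, the random amplitude $\tilde\sigma$ and the non-compactness of the spatial domain require a genuine approximation argument, most naturally via a spatial cut-off $\phi_R$ and a limit $R\to\infty$ controlled by the moment bounds of Section \ref{sec:moment} (which in particular yield $\E\|v(s)\|_{L^1}=\|v_0\|_{L^1}$, so $v(s)\in L^1$ a.s.). Once the martingale representation is secured, the rest is a routine bootstrap.
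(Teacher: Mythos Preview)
Your approach is correct and matches the paper's own: the paper also represents $\|v(t)\|_{L^1}$ as a continuous martingale with quadratic variation bounded by $\lip_\sigma^2\int_0^t\|v(s)\|_{L^2}^2\,\d s$, and then defers the remainder (the exponential-inequality bootstrap combined with control of the peaks) to the corresponding argument in \cite{KKM}, with Lemma~\ref{lem:moment} supplying the needed moment input. Your use of Proposition~\ref{pr:peaks} plus Bernstein's inequality is exactly that argument made explicit.

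One notational slip to fix: the proposition concerns the solution $v$ of \eqref{eq:SHE2}, whose noise term is $v\,\tilde\xi$, not $\sigma(v)\,\xi$. Thus the martingale is
\[
N(t)=\int_{(0,t)\times\R} v(s,y)\,\tilde\xi(\d s\,\d y)
=\int_{(0,t)\times\R} v(s,y)\,\frac{\sigma(u(s,y))}{u(s,y)}\,\xi(\d s\,\d y),
\]
and the quadratic variation bound comes from \eqref{eq:worthy} rather than from \eqref{eq:lip-sigma-condition} applied to $\sigma(v)$. The resulting inequality $\langle N\rangle_t\le\lip_\sigma^2\int_0^t\|v(s)\|_{L^2}^2\,\d s\le\lip_\sigma^2\int_0^t\|v(s)\|_{L^\infty}\|v(s)\|_{L^1}\,\d s$ is unchanged, so the rest of your argument goes through verbatim.
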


\begin{proof}
	Since $\tilde\xi$ can be regarded as a worthy martingale measure, 
	we can conclude from\eqref{eq:worthy} that $\|v(t)\|_{L^1}$ is a continuous 
	$L^2(\Omega)$-martingale whose quadratic variation at time $t>0$ is
	bounded by $\lip_\sigma^2 \int_0^t \|v(s)\|_{L^2(\R)}^2 \d s$. 
	We now simply follow the proof of  Proposition 3.7 of \cite{KKM}.
	The only difference is that the present Lemma \ref{lem:moment} can be used  
	for the estimates of  the moments of $v$; one can easily see that this change 
	does not affect the validity of our claim and deduce the proposition.
	The details of the proof are left to the interested reader.  
\end{proof}

\subsection{Proof of Proposition \ref{prop:WLOG} }
We now prove Proposition \ref{prop:WLOG}. Suppose  $v$ denotes the unique solution to 
\eqref{eq:SHE2} subject to initial data $v_0 \in C^+_{\textit{rap}}(\R)$. 
Since $v_0 \in C^+_{\textit{rap}}(\R)$, it follows that 
$v(t) \in C^+_{\textit{rap}}(\R)$ for all $t\geq 0$ almost surely.
For details, see for example Shiga \cite[Theorem 2.5]{Shiga}.\footnote{%
	In Shiga's work  \cite[Theorem 2.5]{Shiga}, the noise part is $\tilde\sigma(t\,, x\,, u\,, \omega) 
	\, \xi(t\,, x)$ where $\xi$ denotes space-time white noise and $\tilde\sigma$ is a random 
	function  that satisfies certain regularity conditions. 
	Here,  we  can regard    $v(t\,,x) \tilde\xi(t\,,x) := v(t\,, x)\frac{\sigma(u(t\,, x))}{u(t\,, x)} \, \xi(t\,, x)$ 
	as $ \tilde\sigma(t, x, v, \omega) \xi(t, x)$
	and immediately conclude that  $\tilde\sigma$  satisfies the certain regularity 
	conditions of Shiga \cite{Shiga}.}. 
Thus, we can see that $v(t) \in L^1\cap L^\infty$ for all $t\geq 0$ almost surely. 
We are ready to proceed with the proof.

\begin{proof}[Proof of Proposition \ref{prop:WLOG}]
	This proof is similar to the proof of \cite[Theorem 1.3]{KKM}.
	
	We first   define a stopping time $\tau(n)$ for every $n\geq 1$ as 
	 \[
		\tau(n) = \inf\left\{ t>0:\, \| v(t)\|_{L^\infty} \ge  
		 n^{\beta/2} \| v(t)\|_{L^1}\right\}
		\qquad[\inf\varnothing=\infty].
	\] 
	Then, since $\P(A\cap B) \leq \P(A\mid B)$, 
	\begin{align*}
		\P\left\{ \sup_{0\le t\le n}\frac{\|v(t)\|_{L^\infty}}{
			\|v(t)\|_{L^1}} \ge n^{\beta}\right\} 
			&=
			\P\left\{ \tau(n)<n\ , \sup_{\tau(n) \le t\le n}\frac{\|v(t)\|_{L^\infty}}{
			\|v(t)\|_{L^1}} \ge n^{\beta} \right\}\\
		&\le \P\left(  \left. \sup_{\tau(n) \le t\le n}\frac{\|v(t)\|_{L^\infty}}{
			\|v(t)\|_{L^1}}  \ge n^{\beta}\   \right| \,\tau(n)<\infty\right). 
	\end{align*}
	Since $v(t) \in C^+_{\textit{rap}}(\R)$ for all $t\geq 0$  almost surely, and 
	because $t\mapsto v(t)$ is continuous in time, 
	\[
		\|v(\tau(n))\|_{L^\infty} =   n^{\beta/2} \|v(\tau(n))\|_{L^1}
		\qquad\text{a.s.\ on $\{\tau(n)<\infty\}$.}
	\]
	Thus, by the strong Markov property of $(u\,,v)$ [see Lemma \ref{lem:SMP}],
	\[ 
	\P\left(  \left. \sup_{\tau(n) \le t\le n}\frac{\|v(t)\|_{L^\infty}}{
		\|v(t)\|_{L^1}}  \ge n^{\beta}\,   \right| \, \mathcal{F}_{\tau(n)}\right)  \leq 
		\sup_{\substack{\tilde v_0\in C^+_b(\R):\\
		\|\tilde v_0\|_{L^\infty} =  n^{\beta/2}  \|\tilde v_0\|_{L^1}}}
		  \P\left\{ \sup_{0\le t\le n}\frac{\|\tilde v(t)\|_{L^\infty}}{
			\|\tilde v(t)\|_{L^1}} \ge n^{\beta}\right\} 
	\] 
	where $\tilde v$ denotes the solution to \eqref{eq:tilde v} 
	subject to initial data $\tilde v_0$ that is being optimized under 
	``$\sup_{\tilde{v}_0\in C^+_b(\R)}$\dots .''  Suppress the dependence of
	the following on the parameter $n$, and define
	\[
		\tilde V(t\,,x) = \frac{\tilde v(t\,,x)}{\|\tilde v_0\|_{L^1}}
		\qquad\text{for all $t\ge0$ and $x\in\R$}.
	\]
	The random field $\tilde V=\{\tilde V(t\,,x)\}_{t\ge0,x\in\R}$ solves the SPDE,
	\begin{equation}\label{V}
		\partial_t \tilde V(t\,,x) =\tfrac12 \partial^2_x \tilde V(t\,,x) + 
		\tilde{V}(t\,,x)\hat\xi(t\,, x)\,  
		\qquad\text{on $(0\,,\infty)\times\R$},
	\end{equation}
	subject to $\tilde V(0\,,x)= \tilde V_0(x)$, where 
	$\hat\xi(t\,,x)=\theta_{\tau(n)}\tilde{\xi}(t\,,x)$,  
	$\|\tilde V_0\|_{L^\infty} = n^{\beta/2}$, and  $\|\tilde V_0\|_{L^1}=1.$
	
	Note that $\hat \xi$ can be considered as a worthy martingale measure 
	whose dominating measure is bounded by constant multiple of the
	Lebesgue measure  (as in \eqref{eq:worthy}). 
	Thus,  we can use all the results from  the previous sections. 
	
	Define 
	\[
		N:=n^{\beta/2}.
	\]
	Then,
	\begin{align*}
		\P\left\{ \sup_{0\le t\le n}\frac{\|v(t)\|_{L^\infty}}{
			\|v(t)\|_{L^1}} \ge n^{\beta}\right\} &  \leq 
			\sup_{\substack{\tilde V_0\in C^+_b(\R):\\
			\|\tilde V_0\|_{L^\infty} = n^{\beta/2}, \,   \|\tilde V_0\|_{L^1}=1}}
			\P\left\{ \sup_{0\le t\le n }\frac{\|\tilde V(t)\|_{L^\infty}}{
			\|\tilde V(t)\|_{L^1}} \ge n^\beta  \right\}\\
		&=\sup_{\substack{\tilde V_0\in C^+_b(\R):\\
			\|\tilde V_0\| _{L^\infty} = N, \,   \|\tilde V_0\|_{L^1}=1}}
			\P\left\{ \sup_{0\le t\le N^{2/\beta} }\frac{\|\tilde V(t)\|_{L^\infty}}{
			\|\tilde V(t)\|_{L^1}} \ge N^2\right\} \\
		&\leq A_1 + A_2,
	\end{align*}
	where 
	\begin{align*}
		A_1&:= \sup_{\substack{\tilde V_0\in C^+_b(\R):\\
			\|\tilde V_0\| _{L^\infty} = N, \,   \|\tilde V_0\|_{L^1}=1}}
			\P\left\{ \sup_{0\le t\le N^{-\gamma}  }\frac{\|\tilde V(t)\|_{L^\infty}}{
			\|\tilde V(t)\|_{L^1}} \ge N^2\right\},\\
		A_2&:= \sup_{\substack{\tilde V_0\in C^+_b(\R):\\
			\|\tilde V_0\| _{L^\infty} = N, \,   \|\tilde V_0\|_{L^1}=1}}
			\P\left\{ \sup_{N^{-\gamma} \le t\le N^{2/\beta} }\frac{\|\tilde V(t)\|_{L^\infty}}{
			\|\tilde V(t)\|_{L^1}} \ge N^2\right\}. 
	\end{align*}
	We first bound $A_1$ as follows: 
	\begin{align*} 
		A_1 &\leq \sup_{\substack{\tilde V_0\in C^+_b(\R):\\
			\|\tilde V_0\| _{L^\infty} = N, \,   \|\tilde V_0\|_{L^1}=1}} \P\left\{ \sup_{0\le t\le N^{-\gamma}} 
			\|\tilde V(t)\|_{L^\infty(\T)} \ge 2N\right\}\\
		&\ + \sup_{\substack{\tilde V_0\in C^+_b(\R):\\
			\|\tilde V_0\| _{L^\infty} = N, \,   \|\tilde V_0\|_{L^1}=1}}\P\left\{ 
			\sup_{0\le t\le N^{-\gamma}} \| \tilde V(t)\|_{L^\infty(\T)} < 2N \ ,
			\inf_{0\le t\le N^{-\gamma}} \|\tilde V(t)\|_{L^1(\T)} \le \frac2N\right\}\\
		& \le\sup_{\substack{\tilde V_0\in C^+_b(\R):\\
			\|\tilde V_0\| _{L^\infty} = N, \,   \|\tilde V_0\|_{L^1}=1}}
			\left[ \P\left\{ \sup_{0\le t\le N^{-\gamma}} 
			\|\tilde V(t)\|_{L^\infty(\T)} \ge 2N\right\}
			+ 
			\P\left\{\inf_{0\le t\le N^{-\gamma}} \|\tilde V(t)\|_{L^1(\T)} \le \frac2N\right\} \right] \\
		&\le C_1 \exp\left( - C_1^{-1} N^{(3\gamma -4)/3} \right) \quad \text{for all $N\geq 1$}.
	\end{align*}
	In the last inequality, we used Propositions \ref{pr:peaks} and \ref{pr:valleys}, 
	and the constant $C_1$ only depends on $\gamma$ and $\lip_\sigma$.  
	
	Regarding  $A_2$, we have 
	\begin{align*}
		A_2 &\leq \sup_{\substack{\tilde V_0\in C^+_b(\R):\\
			\|\tilde V_0\| _{L^\infty} = N, \,   \|\tilde V_0\|_{L^1}=1}}
			\P\left\{ \|\tilde V(N^{-\gamma})\|_{L^\infty(\T)} \ge N\right\}\\
		&\hskip.5in +\sup_{\substack{\tilde V_0\in C^+_b(\R):\\
			\|\tilde V_0\| _{L^\infty} = N, \,   \|\tilde V_0\|_{L^1}=1}}
			\P\left\{\|\tilde V(N^{-\gamma})\|_{L^\infty(\T)} \le N\quad 
			\text{and}\quad  \sup_{N^{-\gamma} \le t\le N^{2/\beta} }\frac{\|\tilde V(t)\|_{L^\infty}}{
			\|\tilde V(t)\|_{L^1}} \ge N^2\right\} \\
		&\le K_2\exp\left( - N^{(3\gamma-4)/2} \right) +\hskip-5mm 
			\sup_{\substack{\tilde V_0\in C^+_b(\R):\\
			\|\tilde V_0\| _{L^\infty} = N, \,   \|\tilde V_0\|_{L^1}=1}}  
			\P\left\{ \sup_{0 \le t\le N^{2/\beta} -N^{-	\gamma} }\frac{\|\tilde V(t)\|_{L^\infty}}{
			\|\tilde V(t)\|_{L^1}} \ge N^2\right\} \quad \text{for all $N\geq 1$}. 
	\end{align*} 
	We used Proposition \ref{pr:peaks} in the last inequality,
	and we note that the constant $K_2$ depends only on $\gamma$ and $\lip_\sigma$. 
	In addition, we also conditioned on $\mathcal{F}_{\tau\left(N^{2/\beta}\right)+N^{-\gamma}}$
	and consider  a new SPDE of the form \eqref{V} using the SMP [Lemma \ref{lem:SMP}]. 
	 
	Combine the preceding bounds for $A_1$ and $A_2$ and repeat the above  process  
	to obtain a real number $C=C(\gamma\,, \lip_\sigma)>1$ such that
	\begin{align*}
		& \sup_{\substack{\tilde V_0\in C^+_b(\R):\\
			\|\tilde V_0\| _{L^\infty} = N, \,   \|\tilde V_0\|_{L^1}=1}}
			\P\left\{ \sup_{0\le t\le N^{2/\beta} }\frac{\|\tilde V(t)\|_{L^\infty}}{
			\|\tilde V(t)\|_{L^1}} \ge N^2\right\} \\
		&\leq  C \exp\left( - C^{-1}  N^{(3\gamma-4)/3} \right)	+ \sup_{\substack{\tilde V_0\in C^+_b(\R):\\
			\|\tilde V_0\| _{L^\infty} = N, \,   \|\tilde V_0\|_{L^1}=1}}
			\P\left\{ \sup_{0 \le t\le N^{2/\beta} -N^{-\gamma} }\frac{\|\tilde V(t)\|_{L^\infty}}{
			\|\tilde V(t)\|_{L^1}} \ge N^2\right\}\\
		&\leq 2C \exp\left( - C^{-1}  N^{(3\gamma-4)/3} \right)	+ \sup_{\substack{\tilde V_0\in C^+_b(\R):\\
			\|\tilde V_0\| _{L^\infty} = N, \,   \|\tilde V_0\|_{L^1}=1}}
			\P\left\{ \sup_{0 \le t\le N^{2/\beta} -2N^{-\gamma} }\frac{\|\tilde V(t)\|_{L^\infty}}{
			\|\tilde V(t)\|_{L^1}} \ge N^2\right\} \\
		&  \qquad \qquad \vdots\\		 
		&\leq\ell_N C \exp\left( - C^{-1}  N^{(3\gamma-4)/3} \right),
	\end{align*} 
	where $\ell_N = \lfloor  N^{\gamma+2/\beta} \rfloor + 1$. Since $N=n^{\beta/2}$, 
	\[
		\P\left\{ \sup_{0\le t\le n}\frac{\|v(t)\|_{L^\infty}}{
		\|v(t)\|_{L^1}} \ge n^{\beta}\right\}  \leq C\left(1+ n^{1+\beta\gamma/2}  \right)
		\exp\left(  -C^{-1} n^{(3\gamma-4)\beta/6} \right). 
	\]
	Our choices of $\gamma$ and $\beta \geq 6/(3\gamma -4)$
	yield  the proof of Proposition \ref{prop:WLOG}. 
\end{proof}

\section{Proof of Theorem \ref{th:SHE-cc}: Uniform dissipation} \label{sec:dissipation}

In this section, we show the solution $v$ to \eqref{eq:SHE2}  dissipates uniformly in $x$ when $v_0(x)$ decays rapidly as $|x|\to\infty$ (see Theorem \ref{th:SHE-v}).  This also provides the proof of  Theorem \ref{th:SHE-cc}.  

\begin{theorem} \label{th:SHE-v}
	Suppose  $v$ denotes the unique solution to  \eqref{eq:SHE2} with  initial data
	$v_0 \in C_b^+(\R)$ that satisfies $\limsup_{|x|\to\infty} x^{-2} \log v_0(x) <0$ and $\|v_0\|_{L^1}>0$. 
	Then, there exists  a constant $\Lambda_3>0$  that only depends on ${\rm L}_\sigma$ and $\lip_\sigma$,  
	and an almost surely finite random number $T>0$ such that
	\[
		\sup_{x\in \R}v(t\,, x) \le \exp\left(-\Lambda_3t^{1/3}\right)
		\qquad\text{for all $t>T$}.
	\]
\end{theorem}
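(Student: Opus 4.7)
The hypothesis $\limsup_{|x|\to\infty} x^{-2}\log v_0(x)<0$ says that $v_0$ has strictly subgaussian tails, hence $v_0\in C^+_{\textit{rap}}(\R)\cap L^1\cap L^\infty$ and $\|v_0\|_{L^1}>0$. By rescaling (which only changes the final bound by a multiplicative constant absorbed into the exponential rate), we may work with $v_0$ normalized so that $\|v_0\|_{L^1}=1$, although we will keep the $L^1$ norm visible in the estimates. The plan is to combine two ingredients: a pointwise-in-time control of $\|v(t)\|_{L^\infty}$ by $\|v(t)\|_{L^1}$ coming from Proposition \ref{prop:WLOG}, and an $L^1$-dissipation statement of rate $\exp(-ct^{1/3})$ borrowed from \cite{CCKK}.

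For the first ingredient, I would fix admissible parameters, for instance $\gamma=5/3$ and $\beta=6$ so that $\beta\ge 6/(3\gamma-4)$, and apply Proposition \ref{prop:WLOG} to each integer $n$. Because the probability in \eqref{cond:WLOG} is summable in $n$, the Borel--Cantelli lemma yields an a.s.\ finite random integer $N_0$ such that
\[
    \sup_{0\le t\le n}\frac{\|v(t)\|_{L^\infty}}{\|v(t)\|_{L^1}}\le n^{\beta}
    \qquad\text{for every integer }n\ge N_0.
\]
Setting $n=\lceil t\rceil$ for any real $t\ge N_0$, this gives the deterministic-looking bound $\|v(t)\|_{L^\infty}\le (t+1)^{\beta}\|v(t)\|_{L^1}$ for all large $t$, almost surely.

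For the second ingredient, I would invoke the uniform $L^1$-dissipation result of Chen, Cranston, Khoshnevisan, and Kim \cite{CCKK}: there exist a constant $\Lambda>0$ depending only on $\Lp_\sigma$ and $\lip_\sigma$, and an almost surely finite random time $T_1$, such that
\[
    \|v(t)\|_{L^1}\le \exp\bigl(-\Lambda t^{1/3}\bigr)
    \qquad\text{for every }t>T_1.
\]
Combining the two bounds for all $t>\max(N_0\,,T_1)$ gives
\[
    \sup_{x\in\R} v(t\,,x)=\|v(t)\|_{L^\infty}\le (t+1)^{\beta}\exp\bigl(-\Lambda t^{1/3}\bigr),
\]
and since the polynomial prefactor is absorbed into any $\Lambda_3\in(0\,,\Lambda)$ for $t$ sufficiently large, this yields the asserted conclusion with the random time $T=T(\omega)$ being the (random) value beyond which the prefactor has been swallowed.

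The main obstacle is justifying the second ingredient: the dissipation result of \cite{CCKK} is proved for the original white-noise equation \eqref{eq:SHE}, whereas we need it for $v$ solving the martingale-measure equation \eqref{eq:SHE2}. The crucial observation, which I expect to verify by carefully tracking hypotheses in \cite{CCKK}, is that their argument only uses (i)~the mild formulation and (ii)~the two-sided comparability between the dominating measure of the driving noise and Lebesgue measure, with constants $\Lp_\sigma^2$ and $\lip_\sigma^2$; both features are enjoyed by $\tilde\xi$ via \eqref{eq:worthy}. A secondary check is that the precise hypotheses \cite{CCKK} impose on the initial datum (integrability, decay) are satisfied by the subgaussian $v_0$ assumed here; this should be immediate given the strength of our hypothesis on $v_0$, which gives $v_0\in L^1\cap L^\infty$ and arbitrary polynomial moments.
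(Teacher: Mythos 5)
Your proposal is correct and follows essentially the same route as the paper: the paper's proof likewise combines Proposition \ref{prop:WLOG} (with $\gamma=5/3$, $\beta=6$) with an $L^1$-dissipation estimate obtained by running the argument of Chen--Cranston--Khoshnevisan--Kim for the martingale measure $\tilde\xi$ using only \eqref{eq:worthy} (this is Proposition \ref{prop:L1} in the paper), exactly the verification you flag as the "main obstacle." The only difference is organizational --- the paper bounds the probability of the bad event on each window $[n-1,n]$ and applies Borel--Cantelli once, whereas you apply Borel--Cantelli to each ingredient separately and intersect the resulting almost-sure events --- and both versions are valid.
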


Before we proceed with a proof, let us make two quick remarks.

\begin{remark}\begin{compactenum}[(a)]
	\item Every initial function $v_0 \in C_b^+(\R)$ that satisfies 
		$\limsup_{|x|\to\infty} x^{-2} \log v_0(x) <0$ is automatically an element of
		$C^+_{\textit{rap}}(\R)$.  
	\item Theorem \ref{th:SHE-v} is about  uniform dissipation. We establish
		dissipation by first investigating the long-time behavior of  the total mass process 
		$t\mapsto\|v(t)\|_{L^1}$. 
\end{compactenum}\end{remark}

The proof of Theorem \ref{th:SHE-v} hinges on the following quantitative probability estimate.

\begin{proposition}\label{prop:L1}
	There exist constants $\Lambda_4>0$ and $\Lambda_5>0$
	that only depend on $ {\rm L}_\sigma$ and  $\lip_\sigma$ and satisfy
	\begin{equation}\label{eq:L1bound}
		\P \left\{  \sup_{s\geq t} \|v(s)\|_{L^1} \geq 
		\exp\left(- \Lambda_4 t^{1/3}\right)  \right\} \leq \exp\left(-\Lambda_5 t^{1/3}\right)
		\quad\text{for every $t>0$}.
	\end{equation} 
\end{proposition}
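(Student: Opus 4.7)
The plan is to combine the martingale structure of the total mass $M_t:=\|v(t)\|_{L^1}$ with the dissipation theorem of Chen, Cranston, Khoshnevisan, and Kim \cite{CCKK}, and to upgrade a pointwise-in-time tail bound to a supremum bound by Doob's maximal inequality. First, I would integrate the mild formulation \eqref{eq:mild-v} in the spatial variable $x\in\R$; since $\int_\R p_{t-s}(x-y)\,\d x=1$, a stochastic Fubini argument yields
\[
M_t = \|v_0\|_{L^1} + \int_{(0,t)\times\R} v(s,y)\,\tilde\xi(\d s\,\d y).
\]
By the worthy bounds \eqref{eq:worthy} together with the second-moment estimate of Lemma \ref{lem:moment}, the stochastic integral is a continuous square-integrable local martingale starting at zero. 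Since $M_t\ge 0$, it follows that $M$ is a nonnegative continuous supermartingale. Note that $v(t)\in C^+_{\textit{rap}}(\R)$ almost surely by the Shiga result cited at the start of \S\ref{sec:control}, so $M_t<\infty$ for every $t$.

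Next, since $v_0$ satisfies $\limsup_{|x|\to\infty}x^{-2}\log v_0(x)<0$, it belongs in particular to $C^+_{\textit{rap}}(\R)$, and I would invoke the dissipation theorem from \cite{CCKK} to obtain constants $\Lambda,\Lambda'>0$ depending only on ${\rm L}_\sigma$ and $\lip_\sigma$ such that
\[
\P\left\{M_t\ge\exp\bigl(-\Lambda t^{1/3}\bigr)\right\}\le\exp\bigl(-\Lambda' t^{1/3}\bigr)\qquad\text{for every }t\ge 1.
\]
This is the cubic-root dissipation rate for the $L^1$ mass of the stochastic heat equation on $\R$, which carries over to SPDEs driven by worthy martingale measures satisfying \eqref{eq:worthy} since the estimates in \cite{CCKK} enter only through the dominating-measure constants.

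Finally, I would upgrade this pointwise tail bound to a bound on $\sup_{s\ge t} M_s$. Applying Doob's maximal inequality to the nonnegative supermartingale $\{M_{t+r}\}_{r\ge 0}$ conditionally on $\mathcal{F}_t$ gives, for every $\lambda>0$,
\[
\P\left\{\sup_{s\ge t}M_s\ge\lambda\,\Big|\,\mathcal{F}_t\right\}\le\frac{M_t}{\lambda}\wedge 1.
\]
Setting $\Lambda_4:=\Lambda/2$ and $\lambda:=\exp(-\Lambda_4 t^{1/3})$, I would split on $\{M_t<\exp(-\Lambda t^{1/3})\}$ and its complement to obtain
\[
\P\left\{\sup_{s\ge t}M_s\ge\exp(-\Lambda_4 t^{1/3})\right\}\le\P\{M_t\ge\exp(-\Lambda t^{1/3})\}+\frac{\exp(-\Lambda t^{1/3})}{\lambda}\le\exp(-\Lambda' t^{1/3})+\exp(-\Lambda_4 t^{1/3}).
\]
Choosing $\Lambda_5:=\tfrac12\min(\Lambda',\Lambda_4)$ yields \eqref{eq:L1bound} for all sufficiently large $t$, and the bound extends to every $t>0$ by shrinking $\Lambda_5$ if necessary, since the left-hand side is trivially at most $1$.

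The main obstacle I anticipate is the clean extraction from \cite{CCKK} of the quantitative probability tail bound in the form displayed above, with constants depending only on ${\rm L}_\sigma$ and $\lip_\sigma$. Once that input is in hand, the remaining ingredients---the martingale identity via stochastic Fubini and Doob's maximal inequality for nonnegative supermartingales---are entirely routine.
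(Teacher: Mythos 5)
Your proposal is correct and follows essentially the same route as the paper: the paper likewise observes that $t\mapsto\|v(t)\|_{L^1}$ is a (strictly positive) continuous $L^2(\Omega)$-martingale whose quadratic variation is comparable to $\int_0^t\|v(s)\|_{L^2}^2\,\d s$ by \eqref{eq:worthy}, imports the dissipation estimate from Theorem 4.1 of \cite{CCKK} in the form $\E\sqrt{\|v(t)\|_{L^1}}\le C\e^{-Ct^{1/3}}$, and concludes via Doob's maximal inequality for nonnegative supermartingales. The only cosmetic difference is that the paper applies Doob directly to the supermartingale $\sqrt{\|v(\cdot)\|_{L^1}}$ rather than converting to a pointwise tail bound first and conditioning on $\mathcal{F}_t$; both steps are equivalent given the \cite{CCKK} input.
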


\begin{proof}
	Recall that, off a single null set,  
	$v(t\,,x)\ge0$ for all $t\geq 0$ and $x\in \R$.
	In fact, because of condition \eqref{eq:lip-sigma-condition},  the  argument of \cite{Mueller1} 
	can be used virtually unchanged in order to prove that, off a single null set,
	$v(t\,,x)>0$ simultaneously for every $t>0$ and $x\in \R$. 
	Thus, just as we did in the proof of Proposition \ref{pr:valleys}, we
	may conclude that $t\mapsto \|v(t)\|_{L^1}$ is a strictly positive continuous $L^2(\Omega)$-martingale 
	whose  quadratic variation at every time $t>0$ is
	bounded below and above by constant multiples of $\int_0^t \|v(s)\|_{L^2}^2 \, \d s$ 
	that do not depend on $t$; see also \eqref{eq:worthy}. 
	We now follow the proof of Theorem 4.1 of Chen, Cranston, Khoshnevisan, and Kim
	\cite[(4.20)]{CCKK}, essentially line by line,
	in order to learn that there exists a number $C=C({\rm L}_\sigma\,, \lip_\sigma)>0$ such that 
	\[
		\E \sqrt{ \|v(t)\|_{L^1}} \leq C \exp\left( - C t^{1/3}\right)
		\qquad\text{for every $t>0$}.
	\]
	Proposition \ref{prop:L1} follows from this and Doob's maximal inequality for supermartingales.
\end{proof}

We now use Proposition \ref{prop:L1}, along with Proposition \ref{prop:WLOG},
in order to prove Theorem \ref{th:SHE-v}.

\begin{proof}[Proof of Theorem \ref{th:SHE-v}]
	Let $\Lambda_4>0$ and $\Lambda_5>0$ denote the constants 
	that were given in Proposition \ref{prop:L1}, and let $\alpha>0$ be a real number that will be chosen later.  
	Recall Proposition  \ref{prop:WLOG} and  choose $\gamma=5/3$ and $\beta=6$ in 
	order to find that 
	\begin{equation}
		\P \left\{   \sup_{n-1 \leq t \leq n} \| v(t)\|_{L^\infty}  
		\geq \exp\left(-\alpha n^{1/3} \right)   \right\}  \leq  A_1+A_2,
	\end{equation}
	where
	\begin{align}
		A_1&:= \P\left\{ \sup_{0\le t\le n}\frac{\|v(t)\|_{L^\infty}}{\|v(t)\|_{L^1}} \ge n^6 \right\}, \\
		A_2	&:= \P \left\{  \sup_{n-1 \leq t \leq n } \| v(t)\|_{L^\infty}  
			\geq \exp\left(-\alpha n^{1/3} \right)   \quad \text{and}\quad  
			\sup_{0 \le t\le n}\frac{\|v(t)\|_{L^\infty}}{\|v(t)\|_{L^1}} \le n^6 \right\}.
	\end{align}
	According to Proposition \ref{prop:WLOG}, there exists a constant $c=c(\lip_\sigma)>0$ such that 
	\begin{equation}\label{eq:A1_bound}
		A_1 \leq \exp(-c n) \quad \text{for all every $n\geq 1$}.
	\end{equation} 
	As regards  $A_2$,   let us choose $\alpha < \Lambda_4$ so that for all large $n\geq 2$, 
	\[ 
		n^{-6} \,\exp\left(-\alpha n^{1/3} \right)  \geq \exp\left(-\Lambda_4 (n-1)^{1/3} \right).
	\]  
	In this way, we can find that 
	\begin{equation}\label{eq:A2_bound} \begin{aligned}
		A_2& \leq \P\left\{  \sup_{n-1 \leq t \leq n} \| v(t)\|_{L^1}  
			\geq n^{-6} \,\exp\left(-\alpha n^{1/3} \right)     \right\}   \\
		&\leq \P\left\{  \sup_{ t \geq n-1 } \| v(t)\|_{L^1}  \geq  
			\exp\left(-\Lambda_4 (n-1)^{1/3} \right)     \right\} 
			\leq \exp\left(-\Lambda_5 (n-1)^{1/3}\right),
	\end{aligned}\end{equation}
	thanks to Proposition \ref{prop:L1}.  Therefore,  \eqref{eq:A1_bound} and \eqref{eq:A2_bound}
	together imply the existence of a number 
	$\beta=\beta({\rm L}_\sigma, \lip_\sigma)>0$ such that  for all large $n\geq 1$ 
	\begin{equation}\label{eq:sup_bound-1}
		 \P \left\{   \sup_{n-1 \leq t \leq n} \| v(t)\|_{L^\infty}  \geq 
		 \exp\left(-\alpha n^{1/3} \right)   \right\}  \leq \exp\left(-\beta n^{1/3}\right).  
	\end{equation}  
	This and an the Borel-Cantelli lemma yield Theorem 
	\ref{th:SHE-v}.  
\end{proof}

\section{Proof of Theorem \ref{th:SHE}} \label{sec:main-proof}
Finally, in this section we use the results from \S\S\ref{sec:partition}--\ref{sec:dissipation}
in order to prove Theorem \ref{th:SHE}. 

\begin{proof}[Proof of Theorem \ref{th:SHE}]
	Let $\eta_1,\eta_2>0$ be two real numbers whose numerical values 
	will be determined later on in \eqref{eq:eta}.  
	
	Let us first fix an integer $n\geq1$ and define 
	\[ 
		L(t):=\exp\left( \eta_1 t^{1/3}\right)  \quad \text{and} \quad 
		M:= M_n:=2 \lfloor L(n) \rfloor .  
	\] 
	As in Remark \ref{rem:partition}, we may write $u(t\,, x)$ --- with the initial 
	function $u_0\equiv 1$ --- as  
	\[ 
		u(t\,, x) = \sum_{i=-M}^{M-1} v^{(i)}(t\,, x) +  v^{(M)}(t\,, x),
	\]
	where $v^{(i)}$ and $v^{(M)}$ satisfy  \eqref{eq:SHE2} with respective initial data  
	$v^{(i)}_0$ and $v^{(M)}_0$  that are  continuous and non-negative functions such that 
	the support of $v^{(i)}_0$ is in $[i-1\,, i+1]$ for every $i=-M,\ldots,M-1$
	and the support of 
	$v^{(M)}_0$ is in $\R\setminus(-M\,, M)$. 
	
	For every $\rho \in (0\,,1)$,
	\[
	 	\P\left\{ \sup_{n-1\leq t\leq n} \, \sup_{|x| \leq L(t)} \, 
		\frac{u(t\,, x)}{\e^{-\eta_2 t^{1/3}}} \geq \rho \right\}  \leq A^{(1)}_n + A^{(2)}_n,
	\]
	where 
	\begin{align*}
		A^{(1)}_n&:= \sum_{i=-M}^{M-1} \P\left\{ \adjustlimits\sup_{n-1 \leq t\leq n}
			\sup_{x\in\R}  \, \frac{v^{(i)}(t, x)}{\e^{-\eta_2t^{1/3}}}  \geq \frac{\rho}{2(M+1)} \right\} \\
		A^{(2)}_n&:= \P\left\{\adjustlimits\sup_{n-1 \leq t\leq n}\sup_{|x|\leq L(t)}  \, 
			\frac{v^{(M)}(t, x)}{\e^{-\eta_2t^{1/3}}}  \geq \frac{\rho}{2(M+1)} \right\}. 
	\end{align*} 
	We first consider $A^{(1)}_n$. Since each $v^{(i)}(0)$ has a compact support, 
	we may use  \eqref{eq:sup_bound-1}. Let $\alpha>0$ and $\beta>0$ denote the constants 
	given in \eqref{eq:sup_bound-1}. We now choose and fix $\eta_1,\eta_2>0$ so that 
	\begin{equation}\label{eq:eta}
	 	\eta_1 < \beta \qquad \text{and} \qquad \eta_1+\eta_2 \leq \frac{\alpha}{2}.
	 \end{equation}
	Any such choice of $\eta_1$ and $\eta_2$ ensures that for all large $n\geq 1$,
	\begin{equation}\label{eq:An1} \begin{aligned}
		A^{(1)}_n & \leq \sum_{i=-M}^{M-1} \P\left\{  \sup_{n-1 \leq t\leq n}
			\left\| v^{(i)}(t)\right\|_{L^\infty}  \geq \frac{\rho}{8} 
			\exp\left[ - (\eta_1+\eta_2) \, n^{1/3}\right]  \right\}\\
		& \leq \sum_{i=-M}^{M-1} \P\left\{  \sup_{n-1 \leq t\leq n}
			\left\| v^{(i)}(t)\right\|_{L^\infty}  \geq \exp\left[ - 2(\eta_1+\eta_2) 
			\, n^{1/3}\right]  \right\}
			\leq 4 \exp\left[ - (\beta -\eta_1) n^{1/3}\right].
	\end{aligned}\end{equation}  
	
	We now consider $A_n^{(2)}$. First, let us recall  that
	$v^{(M)}_0(x)=0$ when $|x|\leq M \leq 2L(n) $ and $v^{(M)}_0(x) \in [0\,, 1]$
	in general. Consequently, for every $t\in [n-1\,, n]$,
	\begin{equation}\label{eq:vM} \begin{aligned} 
		 \sup_{|x| \leq L(n)}  \left( S_t v^{(M)}_0\right) (x) 
		 	&=\sup_{|x| \leq L(n)} \int_{-\infty}^\infty p_{t}(x-y)\,  v_0^{(M)}(y)\, \d y
			\leq   \sup_{|x| \leq L(n)} \int_{|y|\geq 2L(n) } p_{t}(x-y) \, \d y \\
		  &\leq 2 \exp\left( - \frac{L(n)}{4n}  \right) = 2 \exp\left( - \frac{\e^{\eta_1 n^{1/3}} } {4n}   \right), 
	  \end{aligned}\end{equation}
	  for every $n\in\Z_+$.
	 Next, we estimate 
	 \[
	 	\E \left[ \sup_{n-1 \leq t \leq n} \, \sup_{|x|\leq L(t)} \left|v^{(M)}(t\,, x) \right|^k \right]
	\]
	as follows:
	 \begin{align*}
		  \E \left[ \sup_{n-1 \leq t \leq n} \, \sup_{|x|\leq L(t)} \left|v^{(M)}(t\,, x) \right|^k \right] 
		  	& \leq \E \left[ \sup_{n-1 \leq t \leq n} \, \sup_{|x|\leq L(n)} 
			\left|v^{(M)}(t\,, x) \right|^k \right]  \\
		  & \leq  \sum_{i=-L(n)}^{L(n)-1} \E \left[ \sup_{n-1 \leq t \leq n} \, 
		  	\sup_{i \leq x \leq  i+1} \left|v^{(M)}(t\,, x) \right|^k \right] 
			\leq 2^{k/2} \left( B_n^{(1)} + B_n^{(2)}\right), 
	   \end{align*}
	 where 
	 \[ 
	 	B_n^{(1)}:=  \sum_{i=-L(n)}^{L(n)-1}  \E \left(\left|v^{(M)}(n\,, i) \right|^k \right)
		\quad \text{and}\quad 
		B_n^{(2)}:=  \sum_{i=-L(n)}^{L(n)-1} \E \left[ 
		\sup_{\substack{%
			n-1 \leq s_1,  s_2 \leq n  \\  i \leq x_1,  x_2  \leq i+1 \\ (s_1, x_1)\neq (s_2, x_2) }} \,  
		|v^{(M)}(s_1, x_1) -v^{(M)}(s_2\,, x_2)|^k  \right].    
	 \]
	 In order to estimate $B_n^{(1)}$ we use Lemma \ref{lem:moment} and \eqref{eq:vM} as follows:
	 \begin{equation}\label{eq:B1n}\begin{aligned} 
		 B_n^{(1)} &\leq A \left\|v^{(M)}_0\right\|_{L^\infty}^{k/2} \,
		 	\exp(Ak^3n) \, \sum_{i=-L(n)}^{L(n)-1} \left( S_n v^{(M)}_0\right)^{k/2}  (i)  \\
		 &\leq 4A  \exp\left( \eta_1 n^{1/3}+  Ak^3n  - \frac{k\e^{\eta_1 n^{1/3}} } {8n}   \right). 
	 \end{aligned}\end{equation}
	Next we estimate  $B_n^{(2)}$.  
	
	Since $v^{(M)}_0 \in C^+_b(\R)$ with $\| v^{(M)}_0 \|_{L^\infty}=1$,  
	we use the well-known fact (see for example Khoshnevisan \cite[Chapter 5]{cbms}) 
	that  there exists a constant $L>0$ that is independent of $n$ and satisfies for all $k\geq 2$,
	\begin{equation}
		\sup_{\substack{n-1 \leq s_1 \neq  s_2 \leq n  \\  -\infty < x_1 \neq x_2 < \infty }}
		\E  \left(\left|  \frac{ v^{(M)}(s_1, x_1) - v^{(M)}(s_2\,, x_2)}{|s_1-s_2|^{1/4} + |x_1-x_2|^{1/2}}
		\right|^k\right) \leq L \e^{Lk^3 n} \left\| v^{(M)}_0 \right\|_{L^\infty}^k =L \e^{Lk^3 n}.
	\end{equation}
	On the other hand, Lemma \ref{lem:moment} and \eqref{eq:vM} together imply that, for all $k\geq 2$,
	\begin{equation}\begin{aligned}  
		&\sup_{\substack{n-1 \leq s_1,  s_2 \leq n  \\  -L(n) \leq  x_1,  x_2 \leq  L(n) }}
			\E \left(\left| v^{M)}(s_1, x_1) - v^{(M)}(s_2, x_2)  \right|^k \right) \leq 2^{k}
			\sup_{\substack{n-1 \leq s \leq n  \\  -L(n) \leq  x  \leq  L(n) }}
			\E \left( |v^{(M)}(s\,, x)|^k \right) \\
		&\hskip2in \leq (2A)^k  \left\| v^{(M)}_0 \right\|_{L^\infty}^{k/2}
			\exp\left( Ak^3 n\right) \sup_{|x|\leq L(n)} \left| \left( S_s v_0^{(M)} \right) (x)  \right|^{k/2} \\
		&\hskip2in\leq  2 (2A)^k \, \exp\left( Ak^3n - \frac{k\e^{\eta_1 n^{1/3}} } {8n}   \right). 
	\end{aligned}\end{equation}
	Since $\min\{a\,, b\} \leq (ab)^{1/2}$ for every $a>0, b>0$, there exists 
	a number $C_1>0$ that is independent of $n$ and satisfies the following
	for all $n-1\leq s_1\neq  s_2\leq n$ and all $i\leq x_1 \neq x_2 \leq i+1$  
	with $i\in [-L(n)\,, L(n)-1]$:
	\[  
		\E \left(\left| v^{(M)}(s_1, x_1) - v^{(M)}(s_2, x_2)  \right|^k\right) \leq C^k_1
		\exp\left( C_1k^3n - \frac{k\e^{\eta_1 n^{1/3}} } {16n}   \right)
		\left( |s_1 -s_2|^{1/8} + |x_1-x_2|^{1/4}\right)^{k}.  
	\]
	 A suitable form of the Kolmogorov continuity theorem \cite[Theorem C.6]{cbms}
	 implies that there exists a number $C_2>0$ -- independent of $n$ -- such that 
	 for all $k\geq 2$ and all $i\in [-L(n)\,, L(n)-1]$,
	\begin{equation}\label{eq:B2n} 
		\E \left[ \sup_{\substack{%
			n-1 \leq s_1,  s_2 \leq n  \\  i \leq x_1,  x_2  \leq i+1 \\ 
			(s_1, x_1)\neq (s_2, x_2) }} 
		\, \left|  \frac{ v^{(M)}(s_1, x_1) - v^{(M)}(s_2, x_2)}{|s_1-s_2|^{1/16} + |x_1-x_2|^{1/8}} \right|^k  \right] 
		\leq C^k_2  \exp\left( C_2k^3n - \frac{k\e^{\eta_1 n^{1/3}} } {16n}   \right).
	\end{equation} 
	By \eqref{eq:B1n} and \eqref{eq:B2n} (here we fix $k=2$) we get  that for all large $n\geq 1$  
	\[
		\E \left[ \sup_{n-1 \leq t \leq n} \, \sup_{|x|\leq L(t)} \left|v^{(M)}(t, x) \right|^2 \right] 
		\lesssim  \e^{-n}. 
	\] 
	By Chebyshev's inequality, we have 
	\begin{align}\label{eq:An2} 
		A_n^{(2)} &\leq \P\left\{ \sup_{n-1 \leq t\leq n} \sup_{|x|\leq L(t)}
			v^{(M)}(t\,, x) \geq  \exp\left[ - 2(\eta_1+\eta_2) \, n^{1/3}\right] \right\} \\\notag
		&\leq  \E \left[ \sup_{n-1 \leq t \leq n} \, \sup_{|x|\leq L(t)} 
			\left|v^{(M)}(t\,, x) \right|^2 \right] \,  \exp\left[ 4(\eta_1+\eta_2) \, n^{1/3}\right]
			\leq  \exp\left( -n + 4(\eta_1+\eta_2) \, n^{1/3} \right). 
	\end{align} 
	 We  combine  \eqref{eq:An1} and \eqref{eq:An2} to see that 
	 \[  
	 	\sum_{n=1}^\infty \P\left\{\sup_{n-1\leq t\leq n} \, 
		\sup_{|x| \leq L(t)} \, \frac{u(t\,, x)}{\e^{-\eta_2 t^{1/3}}} \geq \rho \right\}< \infty.
	\]
	An appeal to the Borel-Cantelli lemma completes the proof. 
\end{proof}

\begin{spacing}{.1}\footnotesize

\bigskip

\noindent\textbf{Davar Khoshnevisan.}\\ 
	Department of Mathematics, University of Utah, Salt Lake City, UT 84112-0090, USA, 
	\texttt{davar@math.utah.edu}\\[1em]
	
\noindent\textbf{Kunwoo Kim.}\\
	Department of Mathematics, Pohang University of Science and Technology (POSTECH), 
	Pohang, Gyeongbuk, Korea 37673, \texttt{kunwoo@postech.ac.kr}\\[1em]

\noindent\textbf{Carl Mueller.}\\
	Department of Mathematics, University of Rochester, Rochester, NY 14627, USA, 
	\texttt{carl.e.mueller@rochester.edu}
\end{spacing}

\end{document}